%        aut-der.tex 
%        -----------------------------------------------------
%        6.6.2014, 22.8.14, 1.4.15

\documentclass{article}
\usepackage{amssymb}
\usepackage{amsthm}
\usepackage[curve,matrix,arrow]{xy}
\textwidth 15cm
  \oddsidemargin 0.7cm
  \evensidemargin 0.7cm
  \textheight 45\baselineskip

%hyphenation
\hyphenation{ho-mo-mor-phism}
\hyphenation{ho-mo-mor-phisms}
\hyphenation{endo-mor-phism}
\hyphenation{endo-mor-phisms}

%theorems etc.
\theoremstyle{plain}\newtheorem{Theorem}{Theorem}[section]
\theoremstyle{plain}
\theoremstyle{plain}
\theoremstyle{plain}\newtheorem{Lemma}[Theorem]{Lemma}
\theoremstyle{plain}\newtheorem{Proposition}[Theorem]{Proposition}
\theoremstyle{plain}\newtheorem{statement}[Theorem]{}
\theoremstyle{definition}\newtheorem{Definition}[Theorem]{Definition}
\theoremstyle{definition}\newtheorem{Example}[Theorem]{Example}
\theoremstyle{definition}
\theoremstyle{definition}\newtheorem{Notation}[Theorem]{Notation}
\theoremstyle{definition}\newtheorem{Remark}[Theorem]{Remark}
\theoremstyle{definition}

%calligraphy

    \def\OP{{\mathcal{O}P}}

\def\CO{{\mathcal{O}}}

%blackboard boldface               %boldface

%operators                               tensorproducts

\def\Aut{\mathrm{Aut}}           \def\tenk{\otimes_k}     
             \def\ten{\otimes}

\def\dim{\mathrm{dim}}           
\def\End{\mathrm{End}}           
\def\Endbar{\underline{\mathrm{End}}}
\def\Ext{\mathrm{Ext}}

\def\Hom{\mathrm{Hom}}           
\def\Hombar{\underline{\mathrm{Hom}}}

\def\ker{\mathrm{ker}}           
\def\Id{\mathrm{Id}}             \def\tenA{\otimes_A}
\def\Im{\mathrm{Im}}             \def\tenB{\otimes_B}
           
\def\Inn{\mathrm{Inn}}

           \def\tenO{\otimes_{\mathcal{O}}}
\def\mod{\mathrm{mod}}

\def\op{\mathrm{op}}
\def\Out{\mathrm{Out}}
\def\Pic{\mathrm{Pic}}
\def\pr{\mathrm{pr}}

\def\st{\mathrm{st}}      
\def\stmod{\mathrm{stmod}}

\title{Integrable derivations and stable
equivalences of Morita type} 
\author{Markus Linckelmann} 
 \date{}
%\address             \vbox{\hbox{Markus Linckelmann}
%                      \hbox{\hskip 3mm City University London}}
%                      \hbox{\hskip 3mm Department of Mathematics}
%                      \hbox{\hskip 3mm Northampton Square}
%                      \hbox{\hskip 3mm London EC1V 0HB}
%                      \hbox{United Kingdom}}
%\endaddress

\begin{document}

\maketitle

\begin{abstract}
Using that integrable derivations of symmetric algebras can
be interpreted in terms of Bockstein homomorphisms in Hochschild
cohomology, we show that integrable derivations are 
invariant under the transfer maps in Hochschild cohomology of
symmetric algebras induced by stable equivalences of Morita type. 
With applications in block theory in mind, we allow complete
discrete valuation rings of unequal characteristic.
\end{abstract}

%%%%%%%%%%%%%%%%%%%%%%%%%%%%%%%%%%%%%%%%%%%%%%%%%%%%%%%%%%%%%%%%%%%%%%%%
\section{Introduction}

Throughout this paper, $\CO$ is a complete discrete valuation ring with 
maximal ideal $J(\CO)=$ $\pi\CO$ for some nonzero element $\pi\in$ 
$\CO$, residue field $k=$ $\CO/J(\CO)$, and field of fractions $K$. 
Let $A$ be an $\CO$-algebra such that $A$ is free of finite rank as an 
$\CO$-module. Let $r$ be a positive integer and let $\alpha$ be an 
$\CO$-algebra automorphism of $A$ such that $\alpha$ induces the 
identity map on $A/\pi^rA$; that is, for all $a\in$ $A$ we have 
$\alpha(a)=$ $a + \pi^r\mu(a)$ for some $\CO$-linear endomorphism $\mu$ 
of $A$. It is well-known, and easy to verify, that the endomorphism of 
$A/\pi^rA$ induced by $\mu$ is a derivation on $A/\pi^rA$. Any 
derivation on $A/\pi^rA$ which arises in this way is called {\it 
integrable}; this concept goes back to work of Gerstenhaber \cite{Ger1}
(see \S \ref{derSection} for more details). The set of 
integrable derivations on $A/\pi^rA$ is an abelian group containing all 
inner derivations, hence determines a subgroup of $HH^1(A/\pi^rA)$, 
denoted $HH^1_A(A/\pi^rA)$. It is shown in \cite[\S 4]{FGM} that if 
$\CO=$ $k[[t]]$, $r=1$, and if $A=$ $\bar A\tenk k[[t]]$ for some 
finite-dimensional $k$-algebra $\bar A$, then $HH^1_A(\bar A)$ is 
invariant 
under Morita equivalences between finite-dimensional $k$-algebras. We 
show that this invariance extends to stable equivalences of Morita type 
between symmetric algebras over an arbitrary complete discrete 
valuation ring $\CO$. An $\CO$-algebra $A$ is called {\it symmetric} if 
$A$ is $\CO$-free of finite rank and if $A\cong$ $A^\vee=$ 
$\Hom_\CO(A,\CO)$ as $A\tenO A^\op$-modules. Following Brou\'e 
\cite[\S 5 A]{BroueEq}, given symmetric $\CO$-algebras $A$ and $B$, an 
$A$-$B$-bimodule $M$ is said to induce a {\it stable equivalence of 
Morita type}, if $M$ is finitely generated projective as a left 
$A$-module and as a right $B$-module, and if the bimodules 
$M\tenB M^\vee$ and $M^\vee\tenA M$ are isomorphic to $A$ and $B$ in the 
relatively $\CO$-stable categories of $A\tenO A^\op$-modules and 
$B\tenO B^\op$-modules, respectively. 

\begin{Theorem} \label{integrableinvariant}
Let $A$ and $B$ be symmetric $\CO$-algebras such that the semisimple 
quotients of $A$ and $B$ are separable. Let $r$ be a positive integer.
Suppose that the canonical maps $Z(A)\to$ $Z(A/\pi^rA)$ and $Z(B)\to$ 
$Z(B/\pi^rB)$ are surjective. Let $M$ be an $A$-$B$-bimodule inducing 
a stable equivalence of Morita type. Then the functor 
$M\tenB - \tenB M^\vee$ induces an $\CO$-linear isomorphism 
$HH^1(B/\pi^rB)\cong$ $HH^1(A/\pi^rA)$ which restricts to an
isomorphism of groups 
$$HH^1_B(B/\pi^rB)\cong HH^1_A(A/\pi^rA). $$
In particular, if $A$ and $B$ are derived or Morita equivalent, 
then $HH^1_B(B/\pi^rB)\cong$ $HH^1_A(A/\pi^rA)$.
\end{Theorem}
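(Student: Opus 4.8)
The plan is to pass to the reduced algebras, transport the stable equivalence and its transfer map there, and then check that this transfer commutes with the Bockstein homomorphism that detects integrability.

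\medskip

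First I would observe that, since $A$ is symmetric and $\CO$-free, the reduction $\bar A = A/\pi^rA$ is a symmetric algebra over $\bar\CO = \CO/\pi^r\CO$, the symmetrizing form being the reduction of that of $A$; likewise for $\bar B = B/\pi^rB$. Because $M$ is $\CO$-free and finitely generated projective on each side, the reduced bimodule $\bar M = M/\pi^rM$ is an $\bar A$-$\bar B$-bimodule, projective on each side, and dualization commutes with reduction for $\CO$-free modules, so $\overline{M^\vee}\cong (\bar M)^\vee$. Reducing the stable-equivalence isomorphisms $M\tenB M^\vee\cong A$ and $M^\vee\tenA M\cong B$ (modulo relatively $\CO$-split projective summands) then shows that $\bar M$ induces a stable equivalence of Morita type between $\bar A$ and $\bar B$, and the identical construction modulo $\pi^{2r}$ provides compatible data one level up. Here the separability of the semisimple quotients is what guarantees that the projective summands occurring in these isomorphisms behave well under reduction.

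\medskip

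Next I would invoke the standard transfer construction for symmetric algebras: the functor $\bar M \otimes_{\bar B} - \otimes_{\bar B} (\bar M)^\vee$ sends $\bar B$ to $\bar A$ up to a projective summand, and so induces on $HH^\ast = \Ext^\ast_{\bar A\tenO \bar A^\op}(\bar A,\bar A)$ a graded $\bar\CO$-linear transfer map $t\colon HH^\ast(\bar B)\to HH^\ast(\bar A)$, with a two-sided inverse in the stable category given by the adjoint functor built from $(\bar M)^\vee$. Since a stable equivalence of Morita type induces an isomorphism of stable (Tate) Hochschild cohomology, and the latter agrees with ordinary Hochschild cohomology in every positive degree, $t$ is an isomorphism in each degree $\geq 1$; in particular it yields the asserted isomorphism $HH^1(\bar B)\cong HH^1(\bar A)$.

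\medskip

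I would then make the Bockstein description precise. Unwinding $\alpha(a)=a+\pi^r\mu(a)$, the map $\mu$ is a $1$-cochain in $\Hom_\CO(A,A)$ whose Hochschild differential equals $-\pi^r(\mu\smile\mu)$; reducing modulo $\pi^r$ shows $\bar\mu$ is a Hochschild $1$-cocycle of $\bar A$, and conversely a cocycle is integrable exactly when its lift can be corrected to satisfy this deformation equation. The resulting obstruction is computed by the connecting homomorphism of the short exact sequence of coefficient bimodules $0\to A\xrightarrow{\pi^r} A\to \bar A\to 0$, i.e. by the Bockstein $\beta\colon HH^1(\bar A)\cong HH^1(A;\bar A)\to HH^2(A;A)$. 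The hypothesis that $Z(A)\to Z(\bar A)$ is surjective says precisely that the preceding connecting map $HH^0(A;\bar A)\to HH^1(A;A)$ vanishes, and this is what makes the identification of $HH^1_A(\bar A)$ with $\ker\beta$ clean and compatible with inner derivations.

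\medskip

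Finally I would show that $t$ commutes with $\beta$. The Bockstein is the connecting map of a short exact sequence of coefficient bimodules, while the transfer is induced by the functor $M\tenB -\tenB M^\vee$, which is exact because $M$ is projective on both sides; naturality of connecting homomorphisms with respect to such a functor, combined with the compatibility between the reductions modulo $\pi^r$ and modulo $\pi^{2r}$ from the first step, gives $t\circ\beta_B=\beta_A\circ t$ up to sign. Hence $t$ carries $HH^1_B(\bar B)=\ker\beta_B$ isomorphically onto $HH^1_A(\bar A)=\ker\beta_A$, and being an additive map preserving inner derivations it restricts to the claimed isomorphism of groups; the final clause follows since a derived or Morita equivalence between symmetric algebras induces a stable equivalence of Morita type. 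I expect the main obstacle to be exactly this last compatibility: the transfer is defined only up to projective summands in the stable category, whereas the Bockstein carries genuine integral deformation information at the $\pi^{2r}$ level, so one must verify that the projective corrections introduced by the stable equivalence do not interfere with the connecting homomorphism. This is where the symmetry of the algebras (making $M^\vee$ a genuine two-sided adjoint) and the two hypotheses on separability and on the surjectivity of $Z(A)\to Z(\bar A)$ must be deployed in a coordinated way.
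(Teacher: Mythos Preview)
Your identification of $HH^1_A(\bar A)$ with the kernel of the Bockstein $\beta\colon HH^1(\bar A)\to HH^2(A)$ is incorrect, and this is the step on which your whole transport argument rests. By the long exact sequence \ref{ZAZbarAlongexact}, $\ker\beta$ is precisely the image of the reduction map $HH^1(A)\to HH^1(\bar A)$, i.e.\ the classes represented by derivations of $\bar A$ that lift to honest derivations of $A$. Integrability, by contrast, asks that $\bar\mu$ arise from an algebra \emph{automorphism} $\alpha=\Id+\pi^r\mu$ of $A$; the lift $\mu$ here is merely $\CO$-linear, and its Hochschild coboundary is $-\pi^r(\mu\smile\mu)$, not zero. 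These two lifting problems are genuinely different. A concrete counterexample is $A=\CO P$ for a nontrivial finite abelian $p$-group $P$, with $K$ containing a primitive $|P|$-th root of unity $\tau$ and $r$ chosen so that $\pi^r\CO=(\tau-1)\CO$: then $K\tenO A$ is separable and $Z(A)\to Z(\bar A)$ is surjective, so Lemma~\ref{HHlemma5} gives $HH^1(A)=0$ and hence $\ker\beta=0$; yet $HH^1_{\CO P}(\bar\CO P)$ is the nontrivial finite group described in Example~\ref{abelianpgroupExample}(b). Thus integrable classes need not lie in $\ker\beta$ at all, and your naturality-of-Bocksteins argument cannot detect them.

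The paper's proof avoids this by never attempting an intrinsic cohomological characterisation of integrability. It works one automorphism at a time: given $\alpha\in\Aut_r(A)$ it first produces, via \cite[Theorem 4.2]{Listable} (Lemma~\ref{aMbLemma1}), a $\beta\in\Aut_r(B)$ with ${_{\alpha^{-1}}{M}}\cong M_\beta$, and then (Lemma~\ref{aMbLemma2}) arranges this bimodule isomorphism to reduce to the identity on $M/\pi^rM$. The relevant connecting map is that of the $\alpha$-twisted sequence $0\to\bar A\to (A/\pi^{2r}A)_\alpha\to\bar A\to 0$ (Proposition~\ref{automHH1-3}), whose target is $HH^1(\bar A)$ rather than $HH^2(A)$ and which sends $\Id_{\bar A}$ to the class of $\bar\mu$. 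Tensoring this sequence by $-\tenA M$ and by $M\tenB-$ yields, thanks to the bimodule isomorphism just arranged, two equivalent short exact sequences of $A$-$B$-bimodules, so the transfer carries $\bar\mu$ to $\bar\nu$ directly. In short, the paper matches automorphisms first and then reads off the derivations, whereas you try to characterise the integrable derivations intrinsically and transport that characterisation---and the characterisation is false.
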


By a result of Rickard \cite[5.5]{RickDer}, a derived equivalence 
between symmetric algebras induces a stable equivalence of Morita type, 
and hence the last statement in the theorem is an immediate consequece
of the first.  
Theorem \ref{integrableinvariant} will be proved, slightly more 
generally for relatively $\CO$-selfinjective algebras, in Theorem 
\ref{outmstable}. 

\begin{Remark}
The assumption that $Z(A)\to$ $Z(A/\pi^rA)$ is surjective holds if $A$ 
is a finite group algebra, or a block algebra, or a source algebra of 
a block. It also holds in the `classic' case where $\CO=$ $k[[t]]$ and 
where $A$ is of the form $\bar A\tenk k[[t]]$ for some 
finite-dimensional $k$-algebra $\bar A$. In that case, $HH^1_A(\bar A)$ 
is a $Z(\bar A)$-submodule, and a stable equivalence of Morita type 
between $\bar A$ and another finite-dimensional $k$-algebra $\bar B$ 
extends to a stable equivalence of Morita type between the 
$k[[t]]$-algebras $A=$ $\bar A\tenk k[[[t]]$ and $B=$ 
$\bar B\tenk k[[t]]$. By contrast, in the more general situation of the 
above theorem, allowing $k$ and $K$ to have unequal characteristic, the 
subgroup $HH^1_A(A/\pi^rA)$ need not be an $\CO/\pi^r\CO$-submodule of 
$HH^1(A/\pi^r A)$, and not every stable equivalence of Morita type 
between $A/\pi^rA$ and $B/\pi^rB$ lifts necessarily to a stable 
equivalence of Morita type between $A$  and $B$. See \S 
\ref{Examplessection} for some examples. 
\end{Remark}

\begin{Notation}\label{aut-notation}
In what follows, the use of algebra automorphisms as subscripts
to modules is as in \cite{Listable}. If $\alpha$ is an automorphism of 
an $\CO$-algebra $A$ and $U$ an $A$-module, we denote by ${_\alpha{U}}$ 
the $A$-module which is equal to $U$ as an $\CO$-module, with $a\in$ $A$
acting as $\alpha(a)$ on $U$. If $\alpha$ is inner, then 
${_\alpha{U}}\cong$ $U$; indeed, if $c\in$ $A^\times$ such that 
$\alpha(a)=$ $cac^{-1}$ for all $a\in$ $A$, then the map sending $u\in$ 
$U$ to $cu$ is an $A$-module isomorphism $U\cong$ ${_{\alpha}{U}}$.
We use the analogous notation for right modules and 
bimodules. If $U$ and $V$ are $A$-$A$-bimodules and $\alpha\in$
$\Aut(A)$, then we have an obvious isomorphism of $A$-$A$-bimodules
$(U_{\alpha})\tenA V\cong$ $U\tenA ({_{\alpha^{-1}}{V}})$.
\end{Notation}

%%%%%%%%%%%%%%%%%%%%%%%%%%%%%%%%%%%%%%%%%%%%%%%%%%%%%%%%%%%%%%%%%%%%%%
\section{Background material}

We collect in this section some basic and well-known facts on 
Hochschild cohomology and stable categories. 
Let $A$ be an $\CO$-algebra such that $A$ is free of finite rank as an 
$\CO$-module. For any integer $n\geq 0$ and any $A\tenO A^\op$-module 
$M$, the Hochschild cohomology in degree $n$ of $A$ with coefficients 
in $M$ is $HH^n(A;M)=$ $\Ext^n_{A\tenO A^\op}(A;M)$; we set $HH^n(A)=$
$HH^n(A;A)$. (We use here that $A$ is $\CO$-free, since in general, 
Hochschild cohomology is defined as a relative $\Ext$-module.) We have 
$HH^0(A)\cong$ $Z(A)$, and $HH^1(A)$ is the space of derivations on $A$ 
modulo inner derivations. The graded $\CO$-module $HH^*(A)=$ 
$\oplus_{n\geq 0}\ HH^n(A)$ is a graded-commutative algebra, and the 
positive degree part is a graded Lie algebra of degree $-1$, with the 
Gerstenhaber bracket; in particular, $HH^1(A)$ is a Lie algebra. See 
e. g. \cite[Ch. 9]{Weibel} for more  material on Hochschild cohomology.  
For $U$, $V$ two $\CO$-free $A$-modules and any integers $r$, $s$
such that $s\geq r>0$ we 
have an obvious isomorphism $\Hom_A(U,V/\pi^rV)\cong$ 
$\Hom_{A/\pi^sA}(U/\pi^s U, V/\pi^r V)$. If $P$ is a projective
resolution of $U$, then $P/\pi^s P$ is a projective resolution of the 
$A/\pi^sA$-module $U/\pi^sU$, and we have an isomorphism of cochain 
complexes $\Hom_A(P, V/\pi^r V)\cong$ 
$\Hom_{A/\pi^s A}(P/\pi^s P, V/\pi^r V)$. Taking cohomology in degree 
$n\geq$ $0$ yields an isomorphism $\Ext^n_A(U,V/\pi^r V)\cong$
$\Ext^n_{A/\pi^s A}(U/\pi^s U,V/\pi^r V)$. In particular, for any 
$r$, $s$ such that $s\geq r >0$ and any $n\geq$ $0$ we have 

\begin{statement} \label{HHnrs}
$$HH^n(A;A/\pi^r A)\cong HH^n(A/\pi^sA;A/\pi^rA) 
\cong HH^n(A/\pi^rA)$$ 
\end{statement}

In what follows the superscript $\pi^r$ to an arrow means the map given 
by multiplication with $\pi^r$. 
We refer to \cite{Brow} for background material on Bockstein 
homomorphisms and spectral sequences. 
Applying the functor 
$\Hom_{A\tenO A^\op}(A,-)$ to the Bockstein short exact sequence of 
$A$-$A$-bimodules
$$\xymatrix{0\ar[r]&A\ar[r]^{\pi^r}&A\ar[r]&A/\pi^r A\ar[r]&0}$$
and making use of the above identifications
yields a long exact sequence of $\CO$-modules which 
starts as follows:

\begin{statement} \label{ZAZbarAlongexact}
$$\xymatrix{0\ar[r]&Z(A)\ar[r]^{\pi^r}&Z(A)\ar[r]&Z(A/\pi^r A)\ar[r]
&HH^1(A)\ar[r]^{\pi^r}&HH^1(A)\ar[r]&}$$
$$\xymatrix{&HH^1(A/\pi^r A)\ar[r]&HH^2(A)\ar[r]^{\pi^r}
&HH^2(A)\ar[r]&\cdots}$$
\end{statement}

If $r$ and $n$ are positive integers
such that that $\pi^r$ annihilates $HH^n(A)$ and $HH^{n+1}(A)$,
then it follows from the long exact sequence \ref{ZAZbarAlongexact}
that there is a short exact sequence

\begin{statement} \label{HHlemma1}
$$\xymatrix{0\ar[r]&HH^n(A)\ar[r]&HH^n(A/\pi^r A)\ar[r]&HH^{n+1}(A)
\ar[r] & 0}$$
\end{statement}

For future use, we briefly note some further immediate 
consequences of \ref{ZAZbarAlongexact}.

\begin{Lemma} \label{ZAZbarAHH1}
Let $r$ be a positive integer.

\smallskip\noindent (i)
The canonical map $Z(A)\to$ $Z(A/\pi^rA)$ is surjective if and
only if $HH^1(A)$ is $\CO$-free.

\smallskip\noindent (ii)
The connecting homomorphism $HH^1(A/\pi^rA)\to$ $HH^2(A)$ is
injective if and only if $HH^1(A)=$ $\{0\}$.
\end{Lemma}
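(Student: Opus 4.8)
The plan is to read both statements straight off the long exact sequence \ref{ZAZbarAlongexact}, combined with two standard facts about the $\CO$-modules occurring in it. The first fact is that each $HH^n(A)$ is a finitely generated $\CO$-module: since $A$ is $\CO$-free of finite rank, the ring $A\tenO A^\op$ is finitely generated as an $\CO$-module and hence Noetherian, so $A$ admits a resolution by finitely generated projective $A\tenO A^\op$-modules, and the $\Ext$-groups $HH^n(A)=\Ext^n_{A\tenO A^\op}(A,A)$ inherit finite generation over the Noetherian ring $\CO$. The second fact is that $\CO$ is a discrete valuation ring, so a finitely generated $\CO$-module is free precisely when it is torsion-free, and Nakayama's lemma is available.

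For (i), I would concentrate on the exact fragment
$$Z(A)\ \longrightarrow\ Z(A/\pi^r A)\ \longrightarrow\ HH^1(A)\ \longrightarrow\ HH^1(A)$$
of \ref{ZAZbarAlongexact}, in which the last arrow is multiplication by $\pi^r$. By exactness at $Z(A/\pi^r A)$, the map $Z(A)\to Z(A/\pi^r A)$ is surjective if and only if the next map $Z(A/\pi^r A)\to HH^1(A)$ vanishes; and by exactness at the first copy of $HH^1(A)$, the image of this latter map is exactly the kernel of multiplication by $\pi^r$ on $HH^1(A)$. Hence surjectivity is equivalent to $\pi^r$ acting injectively on $HH^1(A)$. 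As $r\geq 1$, injectivity of $\pi^r$ is equivalent to injectivity of $\pi$, that is, to $HH^1(A)$ being torsion-free, which by the discrete valuation ring fact above is equivalent to $HH^1(A)$ being $\CO$-free.

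For (ii), I would instead use the fragment
$$HH^1(A)\ \longrightarrow\ HH^1(A)\ \longrightarrow\ HH^1(A/\pi^r A)\ \longrightarrow\ HH^2(A),$$
where now the first arrow is multiplication by $\pi^r$ and the last is the connecting homomorphism. By exactness at $HH^1(A/\pi^r A)$, the kernel of the connecting map equals the image of $HH^1(A)\to HH^1(A/\pi^r A)$, and this image is isomorphic to the cokernel of multiplication by $\pi^r$ on $HH^1(A)$, namely $HH^1(A)/\pi^r HH^1(A)$. Thus the connecting map is injective if and only if $HH^1(A)=\pi^r HH^1(A)$. Since $\pi^r HH^1(A)\subseteq J(\CO)HH^1(A)$ and $HH^1(A)$ is finitely generated over the local ring $\CO$, Nakayama's lemma forces $HH^1(A)=\{0\}$; the converse implication is immediate.

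I do not expect a genuine obstacle here: once the correct fragments are isolated, both equivalences are formal consequences of exactness. The only non-formal inputs are the module-theoretic facts over $\CO$, and the one point warranting care is the finite generation of $HH^1(A)$ as an $\CO$-module, which is precisely where the hypothesis that $A$ is $\CO$-free of finite rank enters and which legitimises the appeals to torsion-freeness and to Nakayama's lemma.
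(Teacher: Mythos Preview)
Your proof is correct and follows essentially the same route as the paper's: both parts are read off the relevant fragments of the long exact sequence \ref{ZAZbarAlongexact}, reducing (i) to injectivity of $\pi^r$ on $HH^1(A)$ and (ii) to surjectivity of $\pi^r$ on $HH^1(A)$, then invoking the structure of finitely generated $\CO$-modules and Nakayama's lemma respectively. The only difference is that you make explicit the finite generation of $HH^1(A)$ over $\CO$, which the paper leaves implicit.
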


\begin{proof}
The above long exact sequence implies that the map $Z(A)\to$
$Z(A/\pi^r A)$ is surjective if and only if the map
$Z(A/\pi^r A)\to$ $HH^1(A)$ is zero, hence if and only if the map 
$HH^1(A)\to$ $HH^1(A)$ induced by multiplication with $\pi^r$ is 
injective. This is the case if and only if $HH^1(A)$ is $\CO$-free, 
proving (i). Similarly, the connecting homomorphism 
$HH^1(A/\pi^r A)\to$ $HH^2(A)$ is injective if and only if the map 
$HH^1(A)\to$ $HH^1(A/\pi^r A)$ is zero, hence if and only if the map 
$HH^1(A)\to$ $HH^1(A)$ induced by multiplication by $\pi^r$ is 
surjective. By Nakayama's Lemma, this is equivalent to $HH^1(A)$ being 
zero, which proves (ii).
\end{proof}

The property of $HH^1(A)$ being $\CO$-free or zero does not depend on 
the integer $r$ in the above lemma. It follows therefore from the first 
statement of the lemma that if $Z(A)\to$ $Z(A/\pi^rA)$ is surjective for 
some positive integer $r$, then it is surjective for any positive 
integer $r$. Similarly, the second statement of this lemma implies that 
if $HH^1(A/\pi^r A)\to$ $HH^2(A)$ is injective for some positive integer 
$r$, then it is injective for any positive integer $r$. The following
observation is well-known.

\begin{Lemma}\label{KAseparable}
Suppose that $K\tenO A$ is separable. Then $HH^n(A)$ is
a torsion $\CO$-module for all positive integers $n$.
\end{Lemma}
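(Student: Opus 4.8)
The plan is to pass to the field of fractions by flat base change and to use that separable algebras have vanishing Hochschild cohomology in positive degrees. First I would note that each $HH^n(A)$ is a finitely generated $\CO$-module: since $A$ is $\CO$-free of finite rank, the bar resolution $P_\bullet\to$ $A$ of $A$ over the enveloping algebra $A^e=$ $A\tenO A^\op$ consists of modules $A\tenO\cdots\tenO A$ (with $n+2$ factors in degree $n$), each of which is free of finite rank over $A^e$; hence $HH^n(A)=$ $\Ext^n_{A^e}(A,A)$ is the cohomology of a cochain complex of finitely generated $\CO$-modules, and is therefore itself finitely generated over $\CO$.

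Next I would exploit that the inclusion $\CO\to$ $K$ is flat, being a localization. Flatness of $K\tenO-$ allows it to commute with taking cohomology of the complex $\Hom_{A^e}(P_\bullet,A)$, and for a finitely generated projective $A^e$-module $P$ there is a natural isomorphism $K\tenO\Hom_{A^e}(P,A)\cong$ $\Hom_{K\tenO A^e}(K\tenO P,K\tenO A)$. Since $K\tenO A^e=$ $(K\tenO A)\tenK(K\tenO A)^\op$ is the enveloping algebra of $K\tenO A$, and $K\tenO P_\bullet\to$ $K\tenO A$ is a resolution of $K\tenO A$ by finitely generated projective modules over it, this would yield a natural isomorphism
$$K\tenO HH^n(A)\cong HH^n(K\tenO A)$$
for all $n\geq$ $0$.

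Finally, since $K\tenO A$ is separable over $K$, it is projective as a module over its enveloping algebra, so $HH^n(K\tenO A)=$ $\{0\}$ for all $n\geq$ $1$. The base-change isomorphism then gives $K\tenO HH^n(A)=$ $\{0\}$ for every $n\geq$ $1$. As $K\tenO HH^n(A)$ is the localization of the finitely generated $\CO$-module $HH^n(A)$ at the zero ideal, its vanishing means precisely that every element of $HH^n(A)$ is annihilated by some nonzero element of $\CO$; that is, $HH^n(A)$ is a torsion $\CO$-module.

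I expect the only delicate point to be the flat base-change isomorphism $K\tenO HH^n(A)\cong$ $HH^n(K\tenO A)$: one must work with a resolution by finitely generated (projective) $A^e$-modules so that $K\tenO-$ genuinely commutes with $\Hom_{A^e}$, and one must check that base change carries the $\CO$-enveloping algebra of $A$ to the $K$-enveloping algebra of $K\tenO A$, which holds because $K\tenO-$ commutes with $\tenO$ and with passage to the opposite algebra. Everything else is formal once separability is read as projectivity over the enveloping algebra.
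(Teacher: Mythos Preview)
Your proof is correct and follows essentially the same approach as the paper's: both use that separability of $K\tenO A$ forces $HH^n(K\tenO A)=0$ for $n\geq 1$, that flat base change gives $K\tenO HH^n(A)\cong HH^n(K\tenO A)$, and that $K\tenO-$ kills precisely the torsion $\CO$-modules. You are in fact more careful than the paper about justifying the base-change isomorphism via a resolution by finitely generated projective $A^e$-modules; the paper simply appeals to exactness of $K\tenO-$, leaving the $\Hom$-compatibility implicit.
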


\begin{proof}
Since $K\tenO A$ is separable it follows that $HH^n(K\tenO A)$
is zero for all positive integers $n$. The exactness of the
functor $K\tenO -$ implies that $\{0\}=$ $HH^n(K\tenO A)\cong$ 
$K\tenO HH^n(A)$. Since $K\tenO -$ annihilates exactly the torsion 
$\CO$-modules, the result follows.
\end{proof}

\begin{Lemma} \label{HHlemma5}
Suppose that $K\tenO A$ is separable. Let $r$ be a positive
integer. The following are equivalent.

\smallskip\noindent (i)
The canonical map $Z(A)\to$ $Z(A/\pi^r A)$ is surjective.

\smallskip\noindent (ii)
We have $HH^1(A)=$ $\{0\}$.

\smallskip\noindent (iii)
The connecting homomorphism $HH^1(A/\pi^r A)\to$ $HH^2(A)$ is
injective.
\end{Lemma}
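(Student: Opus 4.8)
The plan is to deduce all three equivalences directly from the two preceding lemmas, so that essentially no new computation is required. First I would record the one elementary observation the argument rests on: over the discrete valuation ring $\CO$ --- in particular over an integral domain --- an $\CO$-module which is simultaneously free and torsion must be zero, since a nonzero free module possesses a basis element and basis elements are by definition not torsion. With that in hand, the separability hypothesis enters only through Lemma \ref{KAseparable}, which guarantees that $HH^1(A)$ is a torsion $\CO$-module.

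To establish (i) $\Leftrightarrow$ (ii), I would invoke Lemma \ref{ZAZbarAHH1}(i), which identifies surjectivity of the canonical map $Z(A)\to Z(A/\pi^rA)$ with $\CO$-freeness of $HH^1(A)$. Since $HH^1(A)$ is torsion by Lemma \ref{KAseparable}, the ``free and torsion implies zero'' observation forces $HH^1(A)$ to be free exactly when it vanishes. Hence (i) holds if and only if $HH^1(A)=\{0\}$, which is (ii). For (ii) $\Leftrightarrow$ (iii), I would appeal directly to Lemma \ref{ZAZbarAHH1}(ii), which asserts that the connecting homomorphism $HH^1(A/\pi^rA)\to HH^2(A)$ is injective precisely when $HH^1(A)=\{0\}$; this is literally the equivalence of (iii) and (ii). Chaining the two equivalences yields the statement.

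Because both halves are immediate consequences of results already available, there is no genuine obstacle here; the only points requiring a moment's care are the ``free and torsion implies zero'' step, and making sure the torsion property of $HH^1(A)$ is drawn from the separability hypothesis via Lemma \ref{KAseparable} rather than from the surjectivity assumption (which would be circular). The independence of $r$ remarked upon after Lemma \ref{ZAZbarAHH1} then confirms that the equivalence is coherent across all positive integers $r$.
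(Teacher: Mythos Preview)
Your proof is correct and follows essentially the same route as the paper: both arguments combine Lemma~\ref{ZAZbarAHH1}(i) with Lemma~\ref{KAseparable} to get (i)$\Leftrightarrow$(ii) via the ``free and torsion implies zero'' observation, and then invoke Lemma~\ref{ZAZbarAHH1}(ii) directly for (ii)$\Leftrightarrow$(iii). The paper's version is simply terser, writing only that ``the two previous lemmas imply that $HH^1(A)$ is both $\CO$-torsion and torsion free, hence zero'' and that ``the rest follows from~\ref{ZAZbarAHH1}.''
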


\begin{proof}
If (i) holds, then the two previous lemmas imply that $HH^1(A)$ 
is both $\CO$-torsion and torsion free, hence zero.
The rest follows from \ref{ZAZbarAHH1}.
\end{proof}

As before, if (i) or (iii) in this lemma holds for some positive 
integer $r$, it holds for all positive integers $r$, since (ii)
does not depend on $r$. This implies further that if the
equivalent statements in the lemma hold, then all canonical maps 
$Z(A/\pi^rA)\to$ $Z(A/\pi^{r+1}A)$ are surjective.

\begin{Lemma} \label{HHlemma6}
Suppose that $K\tenO A$ is separable and that the canonical map
$Z(A)\to$ $Z(A/\pi A)$ is surjective. Let $r$ be a positive integer.
The bimodule homomorphism $A/\pi^rA\to$ $A/\pi^{r+1}A$ given by
multiplication with $\pi$ on $A$ induces an injective map
$HH^1(A/\pi^rA)\to$ $HH^1(A/\pi^{r+1}A)$ making the diagram
$$\xymatrix{HH^1(A/\pi^r A) \ar[d] \ar[r] 
& HH^1(A/\pi^{r+1}A) \ar[d] \\ HH^2(A) \ar@{=}[r] & HH^2(A)}$$
commutative, where the vertical arrows are the connecting
homomorphisms. Moreover, if $\pi^r$ annihilates $HH^2(A)$, then
all maps in this diagram are isomorphisms. In particular, $HH^2(A)$
has an ascending finite filtration of subspaces isomorphic to 
$HH^1(A/\pi^iA)$, with $i\geq 1$.
\end{Lemma}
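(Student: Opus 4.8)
The plan is to run the entire argument inside $HH^*(A;-)=\Ext^*_{A\tenO A^\op}(A,-)$, using the isomorphisms of \ref{HHnrs} only to rename groups. First I would note that the two hypotheses — separability of $K\tenO A$ and surjectivity of $Z(A)\to Z(A/\pi A)$ — put us in the situation of Lemma \ref{HHlemma5}, so that $HH^1(A)=\{0\}$. Substituting this into the long exact sequence \ref{ZAZbarAlongexact}, written for each positive integer $i$, yields exactness of
$$\xymatrix{0\ar[r]&HH^1(A/\pi^iA)\ar[r]^{\delta_i}&HH^2(A)\ar[r]^{\pi^i}&HH^2(A)}$$
so that the connecting homomorphism $\delta_i$ is injective with image the submodule $HH^2(A)[\pi^i]$ of elements annihilated by $\pi^i$.

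The key step is to obtain the commutative square as an instance of the naturality of connecting homomorphisms. I would build the morphism of Bockstein short exact sequences
$$\xymatrix{0\ar[r]&A\ar[r]^{\pi^r}\ar[d]_{\Id}&A\ar[r]\ar[d]^{\pi}&A/\pi^rA\ar[r]\ar[d]^{m}&0\\0\ar[r]&A\ar[r]^{\pi^{r+1}}&A\ar[r]&A/\pi^{r+1}A\ar[r]&0}$$
whose middle vertical map is multiplication by $\pi$ and whose right-hand vertical map $m$ is the bimodule homomorphism of the statement, sending $a+\pi^rA$ to $\pi a+\pi^{r+1}A$; both squares commute by inspection. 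Applying $\Hom_{A\tenO A^\op}(A,-)$ and invoking that the connecting homomorphism is natural with respect to morphisms of short exact sequences of coefficient bimodules, I obtain $\delta_{r+1}\circ\phi_r=\delta_r$, where $\phi_r$ is the map induced by $m$ on $HH^1(A;-)$ (transported through \ref{HHnrs}) and the induced map on $HH^2(A;A)=HH^2(A)$ is the identity, the left-hand vertical map being $\Id$. This is precisely the asserted commutative square. Injectivity of $\phi_r$ is then immediate: if $\phi_r(x)=0$ then $\delta_r(x)=\delta_{r+1}(\phi_r(x))=0$, hence $x=0$ since $\delta_r$ is injective.

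For the ``moreover'' clause I would argue that if $\pi^r$ annihilates $HH^2(A)$ then so does $\pi^{r+1}$, whence $HH^2(A)[\pi^r]=HH^2(A)[\pi^{r+1}]=HH^2(A)$; by the first paragraph both $\delta_r$ and $\delta_{r+1}$ are then surjective, hence isomorphisms, and the relation $\delta_{r+1}\circ\phi_r=\delta_r$ forces $\phi_r$ to be an isomorphism as well, so all four maps in the diagram are isomorphisms. Finally, for the filtration statement I would transport everything through the $\delta_i$: these identify $HH^1(A/\pi^iA)$ with the ascending chain of submodules $HH^2(A)[\pi^i]$ of $HH^2(A)$, and identify the $\phi_i$ with the inclusions $HH^2(A)[\pi^i]\subseteq HH^2(A)[\pi^{i+1}]$. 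Since $K\tenO A$ is separable, Lemma \ref{KAseparable} shows that $HH^2(A)$ is torsion; being in addition finitely generated over the discrete valuation ring $\CO$ (as $A$ is $\CO$-free of finite rank), it has finite length and is annihilated by $\pi^N$ for some $N$, so this chain stabilizes at $HH^2(A)$ after finitely many steps. This gives the desired ascending finite filtration with successive terms isomorphic to the $HH^1(A/\pi^iA)$.

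The step I expect to require the most care is the naturality square: I must check that the isomorphisms of \ref{HHnrs} — induced by a fixed projective resolution $P$ of $A$ over $A\tenO A^\op$ via $\Hom_{A\tenO A^\op}(P,-)$ — are compatible with post-composition by $m$, so that the abstract functoriality of $\Ext^*_{A\tenO A^\op}(A,-)$ in the coefficient module genuinely recovers the map $\phi_r$ described in the statement. Once that compatibility is recorded, the remainder is routine bookkeeping with the two long exact sequences.
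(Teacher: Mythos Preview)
Your proposal is correct and follows essentially the same approach as the paper's proof: both use Lemma \ref{HHlemma5} to obtain $HH^1(A)=\{0\}$, deduce injectivity of the connecting maps from the long exact sequence \ref{ZAZbarAlongexact}, and obtain the commutative square by comparing the Bockstein sequences for $r$ and $r+1$. Your treatment is in fact more explicit than the paper's---you write down the morphism of short exact sequences, identify the image of $\delta_i$ as the $\pi^i$-torsion in $HH^2(A)$, and spell out why the filtration is finite---where the paper simply says ``comparing the long exact sequences'' and leaves the filtration statement unproved.
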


\begin{proof}
The commutativity of the diagram follows from comparing the long
exact sequences from \ref{ZAZbarAlongexact} for $r$ and $r+1$.
Since the canonical map $Z(A)\to$ $Z(A/\pi A)$ is surjective
and since $K\tenO A$ is separable, it follows from
\ref{HHlemma5} that the connecting homomorphisms in the statement
are injective, and hence that the map $HH^1(A/\pi^rA)\to$ 
$HH^1(A/\pi^{r+1}A)$ is injective.  If $\pi^r$ annihilates $HH^2(A)$, 
then the long exact sequence \ref{ZAZbarAlongexact} 
implies that the connecting homomorphism $HH^1(A/\pi^rA)\to$
$HH^2(A)$ is also surjective, whence the result.
\end{proof}

We conclude this section with a very brief review of relatively
$\CO$-stable categories and stable equivalences; more details
can be found for instance in \cite{BroueHigman}. Let $A$ be
an $\CO$-algebra which is free of finite rank as an $\CO$-module.

An $A$-module $U$ is called {\it relatively $\CO$-projective} if
$U$ is isomorphic to a direct summand of $A\tenO V$ for some
$\CO$-module $V$. If $U$ is relatively $\CO$-projective, then the 
canonical map $A\tenO U\to$ $U$ sending $a\ten u$ to $au$ splits (this 
is a standard fact on the splitting of adjunction units and counits).
Dually, $U$ is called {\it relatively $\CO$-injective} if $U$ is
isomorphic to a direct summand of $\Hom_\CO(A,V)$ for some $\CO$-module
$V$, where the $A$-module structure on $\Hom_\CO(A,V)$ is given
by $(a\cdot\varphi)(b)=$ $\varphi(ba)$, for $a$, $b\in$ $A$ and
$\varphi\in$ $\Hom_\CO(A,V)$. As before, if $U$ is relatively
$\CO$-injective, then the canonical map $U\to$ $\Hom_\CO(A,U)$ 
sending $u\in$ $U$ to the map $a\mapsto$ $au$ is split. 

Suppose 
that $k\tenO A$ is selfinjective. Then $k\tenO A$ is injective as a 
left and right $k\tenO A$-module, hence its $k$-dual is a progenerator 
as a left and right $k\tenO A$-module. Since finitely generated
projective $k\tenO A$-modules lift uniquely, up to isomorphism, to 
finitely generated projective $A$-modules, it follows that the
$\CO$-dual $A^\vee=$ $\Hom_\CO(A,\CO)$ is a progenerator of
$A$ as a left and right $A$-module. Thus $U$ is relatively
$\CO$-projective if and only if $U$ is isomorphic to a direct
summand of finitely many copies of $A^\vee\tenO U=$
$\Hom_\CO(A,\CO)\tenO U\cong$ $\Hom_\CO(A,U)$, where the last
map sends $\alpha\ten u$ to the map $a\mapsto$ $\varphi(a)u$, for
$a\in$ $A$, $u\in$ $U$, $\alpha\in$ $\Hom_\CO(A,\CO)$. Thus
$U$ is relatively $\CO$-projective if and only if $U$ is relatively
$\CO$-injective. Identifying the relatively $\CO$-projective
modules to zero yields therefore a triangulated category,
called the {\it relatively $\CO$-stable category of finitely generated
$A$-modules $\stmod(A)$}. The objects of $\stmod(A)$ are the finitely 
generated $A$-modules. For any two finitely generated $A$-modules $U$, 
$U'$, denote by $\Hom_A^\pr(U,U')$ the set of $A$-homomorphisms from 
$U$ to $U'$ which factor through a relatively $\CO$-projective 
$A$-module. This is an $\CO$-submodule of $\Hom_A(U,U')$.
The space of morphisms in $\stmod(A)$ from $U$ to $U'$ is
the $\CO$-module $\Hombar_A(U,U')=$ $\Hom_A(U,U')/\Hom_A^\pr(U,U')$,
with composition of morphisms induced by that in $\mod(A)$.
The triangulated structure in $\stmod(A)$ is induced by $\CO$-split
short exact sequences of finitely generated $A$-modules.

Suppose that $A$ and $B$ are two $\CO$-algebras which are $\CO$-free 
of finite rank such that $k\tenO A$ and $k\tenO B$ are selfinjective.
Then $k\tenO (A\tenO B)$ and $k\tenO A^\op$ are selfinjective as well.
Let $M$ be an $A$-$B$-bimodule such that $M$ is finitely generated
projective as a left $A$-module and as a right $B$-module. The 
functor $M\tenB-$ from $\mod(B)$ to $\mod(A)$ is exact and preserves 
the classes of relatively $\CO$-projective modules, hence induces a 
functor from $\stmod(B)$ to $\stmod(A)$ as triangulated categories.
Let $N$ be a $B$-$A$-bimodule which is finitely generated as a left
$B$-module and as a right $A$-module. Following Brou\'e \cite{BroueEq}
we say that {\it $M$ and $N$ induce a stable equivalence of Morita type
between $A$ and $B$} if we have isomorphisms $M\tenB N\cong$ $B\oplus Y$ 
and $N\tenA M\cong$ $A\oplus X$ as $B\tenO B^\op$-modules and
$A\tenO A^\op$-modules, respectively, such that $Y$ is a projective
$B\tenO B^\op$-module and $X$ is a projective $A\tenO A^\op$-module.
In that case, the functors $M\tenB-$ and $N\tenA -$ induces
equivalences between $\stmod(B)$ and $\stmod(A)$ which are inverse
to each other. Moreover, for any positive integer $r$ the bimodules
$M/\pi^rM$ and $N/\pi^rN$ induce a stable equivalence of Morita
type between the $\CO/\pi^r\CO$-algebras $A/\pi^rA$ and $B/\pi^rB$.
If $A$ is symmetric, then $k\tenO A$ is selfinjective.
If also $B$ is symmetric and if as before $M$ and $N$ induce a stable
equivalence of Morita type between $A$ and $B$, then $M$ and its
$\CO$-dual $M^\vee$ induce a stable equivalence of Morita type.
The following observation is an immediate consequence of the fact that 
if $k\tenO A$ is selfinjective, then projective modules are relatively 
$\CO$-injective. For the convenience of the reader we give a direct 
proof (using some of the above comments).

\begin{Lemma} \label{selfinjLemma}
Suppose that $k\tenO A$ is selfinjective. Let $U$ be an $\CO$-free
$A$-module, let $Q$ be a finitely generated projective $A$-module,
and let $r$ be a positive integer. The map
$\Hom_A(U,Q) \to \Hom_A(U,Q/\pi^rQ)$
induced by the canonical surjection $Q\to$ $Q/\pi^rQ$ is surjective.
\end{Lemma}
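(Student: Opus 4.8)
The plan is to exploit that, since $k\tenO A$ is selfinjective, the finitely generated projective module $Q$ is relatively $\CO$-injective, and then to reduce the claim to a purely $\CO$-linear exactness statement via the coinduction adjunction. The naive approach---applying $\Hom_A(U,-)$ to the short exact sequence $0\to Q\to Q\to Q/\pi^rQ\to 0$, whose first map is multiplication by $\pi^r$, and trying to kill the connecting map into $\Ext^1_A(U,Q)$---does not work directly, because $U$ need not be projective and, crucially, this short exact sequence is \emph{not} $\CO$-split, so the relative injectivity of $Q$ cannot be fed into it as it stands. The remedy is to replace $Q$ by a coinduced module first.

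First I would observe that, $Q$ being relatively $\CO$-injective, the canonical $A$-homomorphism $Q\to\Hom_\CO(A,Q)$ sending $u$ to the map $a\mapsto au$ is split injective; write $\Hom_\CO(A,Q)\cong Q\oplus Q'$ as $A$-modules. Both the functor $\Hom_A(U,-)$ and reduction modulo $\pi^r$ are additive and respect this decomposition, so the map in the statement for $\Hom_\CO(A,Q)$ is the direct sum of the corresponding maps for $Q$ and for $Q'$; and a direct sum of maps is surjective if and only if each summand is. Hence it suffices to prove surjectivity with $Q$ replaced by $L=\Hom_\CO(A,Q)$.

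For $L$ the computation is forced by adjunction. Using that $A$ is $\CO$-free of finite rank, one has $L/\pi^rL\cong\Hom_\CO(A,Q/\pi^rQ)$ as $A$-modules, and the coinduction adjunction gives natural isomorphisms $\Hom_A(U,L)\cong\Hom_\CO(U,Q)$ and $\Hom_A(U,L/\pi^rL)\cong\Hom_\CO(U,Q/\pi^rQ)$, under which the map induced by the surjection $L\to L/\pi^rL$ becomes the map $\Hom_\CO(U,Q)\to\Hom_\CO(U,Q/\pi^rQ)$ induced by $Q\to Q/\pi^rQ$. Since $U$ is $\CO$-free, the functor $\Hom_\CO(U,-)$ is exact, so this last map is surjective, which completes the argument. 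The only point requiring care is the reduction step: one must route around the non-$\CO$-split sequence by passing to the coinduced module $\Hom_\CO(A,Q)$, after which relative injectivity is used merely to realise $Q$ as a direct summand of it, and everything else is the exactness of $\Hom_\CO(U,-)$ on an $\CO$-free module $U$.
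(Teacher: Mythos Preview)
Your argument is correct. Both your proof and the paper's reduce the statement to an $\CO$-linear surjectivity via a Hom--tensor type adjunction, but they package the use of selfinjectivity differently. The paper identifies $Q$ \emph{exactly} as a coinduced module: using that $k\tenO A$ is selfinjective and that projectives lift, it writes $Q\cong\Hom_\CO(R,\CO)$ for some finitely generated projective right $A$-module $R$, applies the adjunction $\Hom_A(U,\Hom_\CO(R,\CO))\cong\Hom_\CO(R\tenA U,\CO)$, and then uses that $R\tenA U$ is $\CO$-free. You instead use selfinjectivity only through the consequence that $Q$ is relatively $\CO$-injective, hence a direct summand of $\Hom_\CO(A,Q)$, and then apply the restriction--coinduction adjunction $\Hom_A(U,\Hom_\CO(A,V))\cong\Hom_\CO(U,V)$. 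Your route avoids the lifting argument for projective indecomposables and the auxiliary module $R$, at the cost of the extra summand step; the paper's route gives a sharper structural description of $Q$ but needs that extra input. Either way the endgame is the same: exactness of $\Hom_\CO$ out of a free $\CO$-module.
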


\begin{proof}
Since $k\tenO A$ is selfinjective, it follows that every projective
indecomposable $k\tenO A$-module is the $k$-dual of a projective
indecomposable right $k\tenO A$-module. Since finitely generated
projective $k\tenO A$-modules lift uniquely, up to isomorphism, to
finitely generated projective $A$-modules, it follows that
$Q\cong$ $\Hom_\CO(R,\CO)$ for some finitely generated
projective right $A$-module $R$, and we have $Q/\pi^rQ\cong$ 
$\Hom_\CO(R,\CO/\pi^r\CO)$. The tensor-Hom adjunction yields
isomorphisms 
$$\Hom_A(U,Q)\cong\Hom_A(U,\Hom_\CO(R,\CO))\cong\Hom_\CO(R\tenA U,\CO)$$
and
$$\Hom_A(U,Q/\pi^rQ)\cong\Hom_A(U,\Hom_\CO(R,\CO/\pi^r\CO))\cong
\Hom_\CO(R\tenA U,\CO/\pi^r\CO)$$
Since $R$ is finitely generated projective as a right $A$-module and
$U$ is $\CO$-free, it follows that $R\tenA U$ is $\CO$-free. This
implies that the canonical map $\Hom_\CO(R\tenA U,\CO)\to$
$\Hom_\CO(R\tenA U,\CO/\pi^r\CO)$ is surjective, and hence so is
the canonical map $\Hom_A(U,Q)\to$ $\Hom_A(U,Q/\pi^rQ)$ as claimed.
\end{proof}

%%%%%%%%%%%%%%%%%%%%%%%%%%%%%%%%%%%%%%%%%%%%%%%%%%%%%%%%%%%%%%%%%%%%%%%
\section{On derivations and algebra automorphisms}
\label{derSection}

For background material on integrable derivations in the context of
deformations of algebras, see for instance Gerstenhaber \cite{Ger1} 
and Matsumura  \cite{Mat}. We first review some basic facts on 
integrable derivations, adapted to arbitrary  complete discrete 
valuation rings rather than rings of power series. 

\medskip
Let $A$ be an $\CO$-algebra which is free of finite rank as an 
$\CO$-module. We denote by $\Aut(A)$ the group of $\CO$-algebra 
automorphisms of $A$, by $\Inn(A)$ the normal subgroup of inner 
automorphisms, and by $\Out(A)=$ $\Aut(A)/\Inn(A)$ the group of outer 
$\CO$-algebra automorphisms of $A$. For any positive integer $r$ we 
denote by $\Aut_r(A)$ the subgroup of $\Aut(A)$ consisting of all 
$\CO$-algebra automorphisms of $A$  which induce the identity on 
$A/\pi^r A$, with the convention $\Aut_0(A)=$ $\Aut(A)$. That is,
$\Aut_r(A)$ is the kernel of the canonical map $\Aut(A)\to$
$\Aut(A/\pi^rA)$. We denote by $\Out_r(A)$ the image of $\Aut_r(A)$ in 
$\Out(A)$. 

\begin{Lemma} \label{OutrLemma}
Let $A$ be an $\CO$-algebra which is free of finite rank as an 
$\CO$-module, and let $r$ be a positive integer. 
The group $\Out_r(A)$ is equal to the kernel of the 
canonical map $\Out(A)\to$ $\Out(A/\pi^rA)$. 
\end{Lemma}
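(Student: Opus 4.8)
The plan is to establish the two inclusions separately; the only inclusion requiring genuine input is the reverse one, and its substance is a lifting of units from $A/\pi^rA$ to $A$. The easy inclusion $\Out_r(A)\subseteq$ $\ker(\Out(A)\to\Out(A/\pi^rA))$ is immediate from the definitions: if $\alpha\in$ $\Aut_r(A)$, then $\alpha$ induces the identity on $A/\pi^rA$, so the image of $\alpha$ in $\Out(A/\pi^rA)$ is trivial; passing to outer automorphism groups shows that the class of $\alpha$ in $\Out(A)$ lies in the stated kernel. Thus $\Out_r(A)$ is contained in the kernel.

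For the reverse inclusion I would start with $\alpha\in$ $\Aut(A)$ whose class in $\Out(A)$ lies in the kernel of $\Out(A)\to$ $\Out(A/\pi^rA)$. This means precisely that the induced automorphism $\bar\alpha$ of $A/\pi^rA$ is inner, so there is a unit $\bar c\in$ $(A/\pi^rA)^\times$ with $\bar\alpha(\bar a)=$ $\bar c\bar a\bar c^{-1}$ for all $\bar a\in$ $A/\pi^rA$. The idea is then to correct $\alpha$ by an inner automorphism of $A$ (which leaves its class in $\Out(A)$ unchanged) so that the corrected automorphism induces the identity on $A/\pi^rA$, hence lies in $\Aut_r(A)$. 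The one step that is not purely formal, and the only place where completeness of $\CO$ is used, is to lift $\bar c$ to a unit $c\in$ $A^\times$. Since $A$ is $\CO$-free of finite rank and $\CO$ is a complete discrete valuation ring, $A$ is $\pi$-adically complete; consequently $\pi A$ lies in the Jacobson radical of $A$, and an element of $A$ is invertible if and only if its image in $A/\pi A$ is invertible. As $\bar c$ is invertible in $A/\pi^rA$, its further image in $A/\pi A$ is invertible, so any lift $c\in$ $A$ of $\bar c$ is automatically a unit.

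With $c$ in hand, I would let $\gamma\in$ $\Inn(A)$ be conjugation by $c$, i.e. $\gamma(a)=$ $cac^{-1}$. Then $\gamma$ induces on $A/\pi^rA$ exactly the inner automorphism $\bar a\mapsto$ $\bar c\bar a\bar c^{-1}=$ $\bar\alpha(\bar a)$, so $\gamma^{-1}\circ\alpha$ induces the identity on $A/\pi^rA$ and therefore belongs to $\Aut_r(A)$. Since $\gamma$ is inner, $\gamma^{-1}\circ\alpha$ has the same image as $\alpha$ in $\Out(A)$, and that image lies in $\Out_r(A)$ by definition; this shows the kernel is contained in $\Out_r(A)$ and finishes the argument. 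I do not expect a serious obstacle here: everything is formal once the unit-lifting is available, and the unit-lifting is a standard consequence of $\pi$-adic completeness. The single point to state with care is that $\pi A\subseteq$ $J(A)$, since that is what guarantees invertibility can be detected modulo $\pi$ and hence that every lift of $\bar c$ is a unit.
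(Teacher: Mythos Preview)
Your proof is correct and follows essentially the same approach as the paper: both establish the easy inclusion by definition, then for the reverse inclusion lift the unit $\bar c$ implementing the inner automorphism on $A/\pi^rA$ to a unit $c\in A^\times$ and compose $\alpha$ with conjugation by $c^{-1}$ to obtain a representative in $\Aut_r(A)$. The only difference is cosmetic: you justify the surjectivity of $A^\times\to(A/\pi^rA)^\times$ via $\pi$-adic completeness and $\pi A\subseteq J(A)$, whereas the paper simply asserts it.
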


\begin{proof}
The image of 
$\Aut_r(A)$ in $\Out(A)$ belongs trivially to the kernel of the
canonical map $\Out(A)\to$ $\Out(A/\pi^rA)$. We need to show that
any element of this kernel is represented by an 
automorphism in $\Aut_r(A)$. Let $\alpha\in$ $\Aut(A)$ such that
$\alpha$ induces inner automorphism of $A/\pi^r A$, given by 
conjugation with an invertible element $\bar u$ in $A/\pi^r A$. Since 
the canonical map $A^\times\to$ $(A/\pi^rA)^\times$ is surjective, it 
follows that $\bar u$ lifts to an invertible element $u$ in $A$. 
The composition of $\alpha$ with conjugation by $u^{-1}$ yields an 
automorphism $\alpha'$ which belongs to the same class as $\alpha$ in 
$\Out(A)$ and which induces the identity on $A/\pi^rA$, hence which
belongs to $\Aut_r(A)$. This shows that every class in the
kernel of $\Out(A)\to$ $\Out(A/\pi^rA)$ has a representative
in $\Aut_r(A)$, as required.
\end{proof}

As mentioned before, the extra hypothesis on $Z(A)\to$ $Z(A/\pi^r A)$
being surjective in many of the statements below is satisfied 
automatically if $A=$ $\bar A\tenk k[[t]]$ for some $k$-algebra 
$\bar A$. 

\begin{Proposition} \label{automHH1-1}
Let $A$ be an $\CO$-algebra which is free of finite rank as an
$\CO$-module.
Let $r$ be a positive integer. Suppose that the canonical map $Z(A)\to$ 
$Z(A/\pi^r A)$ is surjective. Let $\alpha\in$ $\Aut_r(A)$, and let
$\mu : A\to$ $A$  be the unique linear map satisfying
$\alpha(a)=$ $a + \pi^r\mu(a)$ for all $a\in$ $A$. 

\smallskip\noindent (i)
The map $\bar\mu : A/\pi^rA\to$ $A/\pi^rA$ induced by $\mu$ is a 
derivation.  The class of $\bar\mu$ in $HH^1(A/\pi^rA)$ depends only on 
the class of $\alpha$ in $\Out(A)$.

\smallskip\noindent (ii)
If $\alpha$ is an inner automorphism of $A$, then $\alpha$ 
is induced by conjugation with an element of the form $c=$ $1+\pi^r d$ 
for some $d\in$ $A$, and we have $\bar\mu=$ $[\bar d,-]$; in particular, 
$\bar\mu$ is an inner derivation in that case.
\end{Proposition}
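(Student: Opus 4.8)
The plan is to prove the two assertions of Proposition~\ref{automHH1-1} directly from the defining relation $\alpha(a)=a+\pi^r\mu(a)$, using multiplicativity of $\alpha$ for the derivation property in (i) and the surjectivity hypothesis on $Z(A)\to Z(A/\pi^rA)$ for the well-definedness of the class.

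\medskip
\noindent\textbf{Proof of (i).}
First I would verify that $\bar\mu$ is a derivation. Applying $\alpha$ to a product and using that $\alpha$ is a ring homomorphism gives
$$ab+\pi^r\mu(ab)=\alpha(ab)=\alpha(a)\alpha(b)=(a+\pi^r\mu(a))(b+\pi^r\mu(b))=ab+\pi^r(\mu(a)b+a\mu(b))+\pi^{2r}\mu(a)\mu(b).$$
Cancelling $ab$ and dividing by $\pi^r$ (legitimate since $A$ is $\CO$-free, hence $\pi^r$-torsion-free) yields $\mu(ab)=\mu(a)b+a\mu(b)+\pi^r\mu(a)\mu(b)$. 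Reducing modulo $\pi^r$ kills the last term, so $\bar\mu(ab)=\bar\mu(a)b+a\bar\mu(b)$ in $A/\pi^rA$; thus $\bar\mu$ is a derivation. For the second claim of (i), suppose $\alpha$ and $\alpha'$ represent the same class in $\Out(A)$, so $\alpha'=\beta\circ\alpha$ where $\beta=\Inn(c)$ for some $c\in A^\times$. Since $\alpha'$ must also lie in $\Aut_r(A)$ for the associated derivation to be defined, I would first argue that $\beta$ itself lies in $\Aut_r(A)$: indeed $\beta=\alpha'\circ\alpha^{-1}$ is a product of elements inducing the identity on $A/\pi^rA$, hence induces the identity there. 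Then by part (ii) (proved below) $\beta$ has associated inner derivation, and writing $\mu'$ for the linear map of $\alpha'$ a short computation shows $\mu'=\mu+(\text{the map for }\beta)+(\text{a }\pi^r\text{-multiple})$, so modulo $\pi^r$ the class of $\bar\mu'$ differs from that of $\bar\mu$ by an inner derivation; hence the classes agree in $HH^1(A/\pi^rA)$.

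\medskip
\noindent\textbf{Proof of (ii).}
Suppose $\alpha=\Inn(c)$ is inner and lies in $\Aut_r(A)$. The key point is to produce a representing unit of the special form $1+\pi^r d$. Since $\alpha$ induces the identity on $A/\pi^rA$, the image $\bar c$ of $c$ is central in $A/\pi^rA$; here is where I invoke the hypothesis that $Z(A)\to Z(A/\pi^rA)$ is surjective, lifting $\bar c$ to a central unit $z\in Z(A)^\times$ with $c\equiv z \pmod{\pi^rA}$. Replacing $c$ by $z^{-1}c$ changes nothing about the inner automorphism $\Inn(c)$ (central factors cancel) and produces a representative $c'=z^{-1}c$ with $c'\equiv 1\pmod{\pi^rA}$, i.e.\ $c'=1+\pi^r d$ for some $d\in A$. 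It remains to identify $\bar\mu$. With $c=1+\pi^r d$ I would compute $\alpha(a)=cac^{-1}$ via $ca=ac+\pi^r(da-ad)=ac+\pi^r[d,a]$, giving $\alpha(a)c=ca=ac+\pi^r[d,a]$, whence $\alpha(a)=a+\pi^r[d,a]c^{-1}$. Since $c^{-1}\equiv 1\pmod{\pi^rA}$, reducing modulo $\pi^r$ gives $\bar\mu(a)=[\bar d,\bar a]$, so $\bar\mu=[\bar d,-]$ is inner.

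\medskip
\noindent I expect the main obstacle to be the well-definedness argument in (i): one must be careful that an outer-equivalence $\alpha'=\Inn(c)\circ\alpha$ between two elements of $\Aut_r(A)$ forces the conjugating unit to be congruent to a central element modulo $\pi^r$, which is precisely where the surjectivity of $Z(A)\to Z(A/\pi^rA)$ enters and why that hypothesis cannot be dropped. Everything else reduces to the two elementary computations above, using only that $A$ is $\CO$-free so that division by $\pi^r$ is valid.
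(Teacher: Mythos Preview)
Your proof is correct and follows essentially the same approach as the paper's: the derivation identity in (i) via multiplicativity of $\alpha$, and in (ii) the use of the surjectivity hypothesis to replace the conjugating unit $c$ by one of the form $1+\pi^r d$, followed by the same commutator computation. The only difference is cosmetic: where the paper says tersely that well-definedness in (i) ``suffices to prove (ii)'', you spell out the intermediate step (that $\beta=\alpha'\circ\alpha^{-1}$ lies in $\Aut_r(A)$ and that the associated derivations add modulo $\pi^r$), which the paper postpones to the proof of the next proposition.
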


\begin{proof}
For $a$, $b\in$ $A$, comparing the expressions $\alpha(ab)=$ 
$ab+\pi^r\mu(ab)$ and $\alpha(a)\alpha(b)=$ 
$ab+\pi^r a\mu(b) + \pi^r\mu(a)b+\pi^{2r}\mu(a)\mu(b)$
yields the equality
$$\mu(ab) = a\mu(b) + \mu(a) b + \pi^r\mu(a)\mu(b)\ .$$
Reducing this modulo $\pi^r A$ shows that $\bar\mu$ is a derivation on 
$A/\pi^r A$. In order to show that the class of $\bar\mu$  in $HH^1(A)$ 
depends only on the image of $\alpha$ in $\Out(A)$ it suffices to prove 
(ii). Suppose that $\alpha$ is inner, induced by conjugation with an 
element $c\in$ $A^\times$. Since $\alpha$ induces the identity on 
$A/\pi^r A$, it follows that $\bar c\in$ $Z(A/\pi^rA)^\times$. 
Since the map $Z(A)\to$ $Z(A/\pi^r A)$ is assumed to be surjective, 
there is an element $z\in$ $Z(A)^\times$ such that $\bar z=$ $\bar c$,
hence such that $cz^{-1}\in$ $1+\pi^r A$. Thus, after replacing $c$ by 
$cz^{-1}$, we may assume that $c=$ $1+\pi^r d$ for some $d\in$ $A$. For 
$a\in$ $A$ we have $cac^{-1}=$ $\alpha(a)=$ $a + \pi^r\mu(a)$, hence 
$ca=$ $ac+\pi^r\mu(a)c$, or equivalently, $[c,a]=$ $\pi^r\mu(a)c$. 
Replacing $c$ by $1+\pi^r d$ in this equation and dividing by $\pi^r$ 
yields $[d,a]=$ $\mu(a)+\pi^r \mu(a)d$, whence $[\bar d,\bar a]=$
$\bar\mu(\bar a)$ as claimed in (ii). 
\end{proof}

\begin{Definition} \label{integrableDef}
Let $A$ be an $\CO$-algebra which is free of finite rank as an
$\CO$-module, and let $r$ be a positive integer. A derivation
$\delta$ on $A/\pi^r A$ is called {\it $A$-integrable} if 
there is an algebra automorphism $\alpha$ of $A$ and an
$\CO$-linear endomorphism $\mu$ of $A$ such that
$\alpha(a)=$ $a+\pi^r \mu(a)$ for all $a\in$ $A$ and such that
$\delta$ is equal to the map induced by $\mu$ on $A/\pi^r A$.
A class $\eta\in$ $HH^1(A/\pi^r A)$ is called {\it $A$-integrable} if
it can be represented by an $A$-integrable derivation. We denote 
by $HH^1_A(A/\pi^r A)$ the set of $A$-integrable classes in 
$HH^1(A/\pi^r A)$. 
\end{Definition}

\begin{Remark} \label{HasseSchmidtRemark}
One can extend the notation from Definition \ref{integrableDef}
in a way which is analogous to Hasse-Schmidt derivations of positive 
length considered for commutative algebras in \cite{NaMa}.
Let $s\geq 2r$ and let $\alpha$ be an automorphism of $A/\pi^sA$ 
which induces the identity on $A/\pi^r A$. Then, for
$\bar a=$ $a+\pi^sA\in$ $A/\pi^sA$ we have
$\alpha(\bar a)=\bar a + \pi^r\nu(\bar a)$ for some
element $\nu(a)$ of $A/\pi^sA$. Unlike in the preceding definition,
multiplication by $\pi^r$ on $A/\pi^sA$ is not injective, and hence
the map $\nu$ need not be a linear endomorphism of $A/\pi^sA$.
The annihilator of $\pi^r$ in $A/\pi^sA$ is $\pi^{s-r}A/\pi^sA$. 
Since $r\leq s-r$ it follows that $\nu$ induces a linear 
endomorphism of $A/\pi^rA$, and as before, this is a derivation. The
derivations and corresponding classes in $HH^1(A/\pi^rA)$ which arise
is this way are called {\it $A/\pi^sA$-integrable}. We denote
by $HH^1_{A/\pi^sA}(A/\pi^rA)$ the set of $A/\pi^sA$-integrable
classes. We have $HH^1_{A/\pi^{2r}A}(A/\pi^rA)=$
$HH^1(A/\pi^rA)$ because if $\mu$ is a linear endomorphism of $A$
inducing a derivation on $A/\pi^rA$, then $\alpha$ defined by
$\alpha(a+\pi^{2r}A) = a + \pi^r\mu(a) + \pi^{2r}A$ for any $a\in$ $A$
is an automorphism of $A/\pi^{2r}A$.
We have obvious inclusions $HH^1_A(A/\pi^rA)\subseteq$
$HH^1_{A/\pi^{s+1}A}(A/\pi^rA)\subseteq$ $HH^1_{A/\pi^sA}(A/\pi^rA)$.
These inclusions need not be equalities since an automorphism of 
$A/\pi^{s}A$ which induces the  identity on $A/\pi^rA$ does not 
necessarily lift to an automorphism of $A/\pi^{s+1}A$ or of $A$. 
Another variation of this definition arises from replacing $\pi$ by a 
suitable element in $Z(A)$ which is not a zero divisor. This point of 
view appears in Roggenkamp and Scott \cite[\S 5]{RoSc}. 
\end{Remark}

\begin{Proposition} \label{automHH1-2}
Let $A$ be an $\CO$-algebra which is free of finite rank as an 
$\CO$-module. Let $r$ be a positive integer. Suppose that the canonical 
map $Z(A)\to$ $Z(A/\pi^r A)$ is surjective. Let $\alpha\in$ $\Aut_r(A)$, 
and let $\mu : A\to$ $A$  be the unique linear map satisfying 
$\alpha(a)=$ $a + \pi^r\mu(a)$ for all $a\in$ $A$. Denote by $\bar\mu$ 
the derivation on $A/\pi^rA$ induced by $\mu$.

\smallskip\noindent (i)
The image of $\bar\mu$ in $HH^1(A/\pi^r A)$ is zero if and only if 
$\alpha$ induces an inner automorphism on $A/\pi^{2r} A$.

\smallskip\noindent (ii)
The map sending $\alpha$ to the class of $\mu$ induces a group 
homomorphism $\Out_r(A)\to$ $HH^1(A/\pi^rA)$, with kernel equal to
$\Out_{2r}(A)$. In particular, $HH^1_A(A/\pi^rA)$ is a subgroup of 
$HH^1(A/\pi^rA)$, and we have a short exact sequence of groups
$$\xymatrix{ 1\ar[r] & \Out_{2r}(A)\ar[r]&\Out_r(A)\ar[r]
& HH^1_A(A/\pi^r A)\ar[r]& 1}$$
\end{Proposition}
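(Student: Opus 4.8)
The plan is to prove part (i) first, as a direct refinement of the computation in Proposition \ref{automHH1-1}(ii), and then to deduce part (ii) formally from (i) together with Proposition \ref{automHH1-1} and Lemma \ref{OutrLemma}.

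For part (i) I would argue both implications by an explicit conjugation computation modulo $\pi^{2r}$. For the ``if'' direction, suppose $\alpha$ induces an inner automorphism of $A/\pi^{2r}A$. Lifting the conjugating unit to some $c\in A^\times$ (using surjectivity of $A^\times\to(A/\pi^{2r}A)^\times$, as in the proof of Lemma \ref{OutrLemma}), the hypothesis $\alpha\in\Aut_r(A)$ forces the image of $c$ in $A/\pi^r A$ to be central; since $Z(A)\to Z(A/\pi^r A)$ is surjective I can correct $c$ by a central unit of $A$ so that $c=1+\pi^r d$ for some $d\in A$. Expanding $cac^{-1}\equiv\alpha(a)\pmod{\pi^{2r}}$ and dividing by $\pi^r$ then yields $[\bar d,\bar a]=\bar\mu(\bar a)$ in $A/\pi^r A$, so $\bar\mu$ is inner and its class vanishes. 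Conversely, if the class of $\bar\mu$ is zero, then $\bar\mu=[\bar d,-]$ for some $d\in A$; I would set $c=1+\pi^r d$, note that it is a unit because $\pi^r d$ lies in the radical $J(A)$, and verify directly that conjugation by $c$ agrees with $\alpha$ modulo $\pi^{2r}$, using $c^{-1}\equiv 1-\pi^r d$ and $[d,a]\equiv\mu(a)\pmod{\pi^r}$. This exhibits $\alpha$ as inner on $A/\pi^{2r}A$.

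For part (ii), the first step is to check that $\alpha\mapsto[\bar\mu]$ is a homomorphism $\Aut_r(A)\to HH^1(A/\pi^r A)$. Writing $\alpha(a)=a+\pi^r\mu(a)$ and $\beta(a)=a+\pi^r\nu(a)$, a short computation of $(\alpha\beta)(a)$ gives $\mu_{\alpha\beta}\equiv\mu+\nu\pmod{\pi^r}$, so the assignment is additive into the abelian group $HH^1(A/\pi^r A)$. By Proposition \ref{automHH1-1} the class depends only on the image of $\alpha$ in $\Out(A)$, and in particular it vanishes on the normal subgroup $\Aut_r(A)\cap\Inn(A)$ by \ref{automHH1-1}(ii); hence the homomorphism descends to a map $\bar\Phi\colon\Out_r(A)\to HH^1(A/\pi^r A)$. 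Its image is, essentially by Definition \ref{integrableDef}, precisely the set $HH^1_A(A/\pi^r A)$ of $A$-integrable classes, which is therefore a subgroup.

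It remains to identify $\ker\bar\Phi$ with $\Out_{2r}(A)$, and this is where part (i) does the real work: $\bar\Phi$ kills the class of $\alpha$ iff the class of $\bar\mu$ is zero, iff by (i) the automorphism $\alpha$ is inner on $A/\pi^{2r}A$, iff the image of $\alpha$ in $\Out(A)$ lies in the kernel of $\Out(A)\to\Out(A/\pi^{2r}A)$, which by Lemma \ref{OutrLemma} applied with $2r$ in place of $r$ is exactly $\Out_{2r}(A)\subseteq\Out_r(A)$. Combining this with the surjectivity of $\bar\Phi$ onto $HH^1_A(A/\pi^r A)$ yields the short exact sequence of the statement. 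I expect the main obstacle to be the computation in part (i): because multiplication by $\pi^r$ is not injective on $A/\pi^{2r}A$, the map $\mu$ is only meaningful modulo $\pi^r$, so one must keep careful track of the $\pi^{2r}$-error terms to see that the first-order datum $\bar\mu$ controls inner-ness exactly at the $\pi^{2r}$ level; once this is in hand, the homomorphism property, the factorization through $\Out_r(A)$, and the kernel identification are all formal.
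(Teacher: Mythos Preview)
Your proposal is correct and follows essentially the same approach as the paper. The one minor variation is in the ``if'' direction of (i): the paper invokes Lemma~\ref{OutrLemma} to replace $\alpha$ by a representative in $\Aut_{2r}(A)$, after which $\bar\mu=0$ outright, whereas you instead lift the conjugating unit and redo the computation of Proposition~\ref{automHH1-1}(ii) modulo $\pi^{2r}$ to exhibit $\bar\mu$ as inner; both arguments are equally short and use the same ingredients.
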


\begin{proof}
In order to show (i), suppose first that $\bar\mu=$ $[\bar d,-]$ for 
some $d\in$ $A$. Set $c=$ $1+\pi^r d$. Write $[d,a]=$ 
$\mu(a)-\pi^r\tau(a)$ for some $\tau(a)\in$ $A$. Then $\pi^r\mu(a)=$ 
$[c,a]+\pi^{2r}\tau(a) = ca-ac+\pi^{2r}\tau(a)$. Multiplying this
by $c^{-1}$ on the right implies that $cac^{-1}=$
$a + \pi^r\mu(a)c^{-1}-\pi^{2r}\tau(a)c^{-1}$. Using $\alpha(a)=$ 
$a+\pi^r\mu(a)$, it follows that $\alpha(a)-cac^{-1}=$ 
$\pi^r\mu(a)(1-c^{-1})+\pi^{2r}\tau(a)c^{-1}$. Since $c\in$ $1+\pi^r A$, 
we have $c^{-1}\in$ $1+\pi^r A$, hence $1-c^{-1}\in$ $\pi^r A$. This 
shows that $\alpha(a)-cac^{-1}\in$ $\pi^{2r} A$, hence
$\alpha$ induces an inner automorphism on $A/\pi^{2r}A$.
Suppose conversely that $\alpha$ acts as inner automorphism on
$A/\pi^{2r}A$. That is, the class of $\alpha$ belongs to the
kernel of the canonical map $\Out(A)\to$ $\Out(A/\pi^{2r}A)$.
By \ref{OutrLemma}, the class of $\alpha$ has a representative in 
$\Aut_{2r}(A)$.
Thus, after replacing $\alpha$ by a suitable representative in the
class of $\alpha$ in $\Out(A)$ we may assume that $\alpha\in$ 
$\Aut_{2r}(A)$. Thus $\alpha(a)=$ $a + \pi^{2r}\mu'(a)$ for some 
$\mu'(a)\in$ $A$, or equivalently, $\mu(a)=$ $\pi^r\mu'(a)$. This 
shows that $\mu$ induces the zero map on $A/\pi^r A$, proving (i). Let 
$\beta\in$ $\Aut_r(A)$ and $\nu$ such that $\beta(a)=$ $a+\pi^r\nu(a)$ 
for all $a\in$ $A$. A short calculation shows that $\beta(\alpha(a))=$
$a + \pi^r\mu(a)+\pi^r\nu(a)+\pi^{2r}\nu(\mu(a))=$
$\pi^r(\mu(a)+\nu(a)+\pi^r\nu(\mu(a)))$. This shows that the class 
determined by $\beta\circ\alpha$ in $HH^1(A/\pi^rA)$ is the class 
determined by $\bar\mu+\bar\nu$. Statement (ii) follows from (i). 
\end{proof}

The propositions above imply that $HH^1_A(A/\pi^rA)$ is a subgroup of 
$HH^1(A/\pi^rA)$, and that any inner derivation of $A/\pi^r A$ is 
$A$-integrable. Thus if a class $\eta\in$ $HH^1(A/\pi^rA)$ is 
$A$-integrable, then any derivation representing $\eta$ is 
$A$-integrable.
The group $HH^1_A(A/\pi^r A)$ depends in general on $A$, not just on 
$A/\pi^r A$; see the examples in \S \ref{Examplessection} below. If $A$ 
is clear from the context, we simply say that a class $\eta$, or a 
derivation representing $\eta$, is integrable instead of $A$-integrable. 
The arguments in the proof of the previous proposition extend in the
obvious way to $A/\pi^{2r}A$-integrable derivations, and yield the
following statement. The set $HH^1_{A/\pi^{2r}A}(A/\pi^rA)$ is a subgroup
of $HH^1(A/\pi^rA)$. Denote by $\Out_r(A/\pi^{2r}A)$ the group 
modulo inner automorphisms of algebra automorphisms of $A/\pi^{2r}A$ 
which induce the identity on $A/\pi^rA$.
Proposition \ref{automHH1-2} (ii) and the fact that
the image of $\Out_{2r}(A)$ in $\Out(A/\pi^{2r}A)$ is trivial imply
that we have a group isomorphism
$$\Out_r(A/\pi^{2r}A) \cong HH^1_{A/\pi^{2r}A}(A/\pi^rA)\ .$$

%%%%%%%%%%%%%%%%%%%%%%%%%%%%%%%%%%%%%%%%%%%%%%%%%%%%%%%%%%%%%%%%%%%%
\section{Integrable derivations and Bockstein homomorphisms}

We interpret integrable derivations as images under certain Bockstein 
homomorphisms. Let $A$ be an $\CO$-algebra. Recall that if
$$\xymatrix{0\ar[r] & X\ar[r]^{\tau}& Y \ar[r]^{\sigma}& Z\ar[r] &0}$$
is a short exact sequence of cochain complexes of $A$-modules $X$, $Y$, 
$Z$ with differentials $\delta$, $\epsilon$, $\zeta$, respectively, 
and if $n$ is an integer, then the connecting homomorphism $H^n(Z)\to$ 
$H^{n+1}(X)$ is constructed as follows. Let $\bar z=$ 
$z+\Im(\zeta^{n-1})\in$ 
$H^n(Z)$ for some $z\in$ $\ker(\zeta^n)\subseteq$ $Z^n$. Let $y\in$ 
$Y^n$ such that $\sigma^n(y)=$ $z$. Then $\epsilon^n(y)\in$ $Y^{n+1}$
satisfies $\sigma^{n+1}(\epsilon^n(y))=$ $\zeta^{n}(\sigma^n(y))=$
$\zeta^{n}(z)=0$, hence $y\in$ $\ker(\sigma^{n+1})=$ $\Im(\tau^{n+1})$.
Thus there is $x\in$ $X^{n+1}$ such that $\tau^{n+1}(x)=$ 
$\epsilon^n(y)$. An easy verification shows that $x\in$ 
$\ker(\delta^{n+1})$ and that the class $\bar x=$ $x+\Im(\delta^n)\in$ 
$H^{n+1}(X)$ depends only on the class $\bar z$ of $z$ in $H^n(Z)$. The 
connecting homomorphism sends $\bar z$ to $\bar x$ as defined above. 
We use algebra automorphisms as subscripts to modules and bimodules as 
described in \ref{aut-notation} above. Write $A^e=$ $A\tenO A^\op$.
If $A$ is free of finite rank as an $\CO$-module, and if $\alpha\in$ 
$\Aut_r(A)$ for some positive integer $r$, then 
${_\alpha(A/\pi^r A)}\cong$ $A/\pi^r A\cong$ $(A/\pi^r A)_\alpha$ as 
$A^e$-modules, but the $A^e$-modules $A/\pi^{2r} A$ and 
$(A/\pi^{2r} A)_\alpha$ need not be isomorphic.

\begin{Proposition} \label{automHH1-3}
Let $A$ be an $\CO$-algebra which is free of finite rank as an
$\CO$-module. Let $r$ be a positive integer. Suppose that the canonical 
map $Z(A)\to$ $Z(A/\pi^r A)$ is surjective. Let $\alpha\in$ $\Aut_r(A)$, 
and let $\mu : A\to$ $A$  be the unique linear map satisfying
$\alpha(a)=$ $a + \pi^r\mu(a)$ for all $a\in$ $A$. 
Let $P$ be a projective resolution of $A$ as an $A^e$-module. Applying 
the functor $\Hom_{A^e}(P,-)$ to the the canonical short exact 
sequence of $A^e$-modules
$$\xymatrix{0\ar[r]&A/\pi^rA\ar[r]&(A/\pi^{2r}A)_\alpha\ar[r]&A/\pi^rA
\ar[r] & 0}$$
yields a short exact sequence of cochain complexes of $\CO$-modules
$$\xymatrix{0\ar[r]&\Hom_{A^e}(P,A/\pi^rA)
\ar[r] & \Hom_{A^e}(P,(A/\pi^{2r}A)_\alpha)
\ar[r] & \Hom_{A^e}(P,A/\pi^rA)
\ar[r] & 0}$$
The first nontrivial connecting homomorphism of the induced long
exact sequence in cohomology can be canonically identified with a map
$$\xymatrix{\End_{A^e}(A/\pi^rA)\ar[rr] &  & HH^1(A/\pi^rA) }$$
The image of $\Id_{A/\pi^rA}$ under this map is the class
of the derivation on $A/\pi^rA$ induced by $\mu$.
\end{Proposition}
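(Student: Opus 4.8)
The plan is to compute the connecting homomorphism by hand on a convenient resolution. The long exact sequence, the identifications \ref{HHnrs}, and hence the connecting homomorphism, are all natural in $P$ and independent of the choice of projective resolution of $A$; so I would take $P$ to be the reduced bar resolution, with $P_n$ the $(n{+}2)$-fold tensor product of $A$ over $\CO$, augmentation $P_0=A\ten A\to A$ the multiplication map $\eta$, and $d_1\colon P_1\to P_0$ given by $d_1(a\ten b\ten c)=ab\ten c-a\ten bc$. I would use the standard identifications $\Hom_{A^e}(P_0,M)\cong M$ via $f\mapsto f(1\ten 1)$ and $\Hom_{A^e}(P_1,M)\cong\Hom_\CO(A,M)$ via $f\mapsto(b\mapsto f(1\ten b\ten 1))$, under which the degree-one cocycles are precisely the derivations $A\to M$, and the class of the derivation induced by $\mu$ corresponds to the cochain $b\mapsto\overline{\mu(b)}$. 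With these identifications $H^0(\Hom_{A^e}(P,A/\pi^rA))=\Hom_{A^e}(A,A/\pi^rA)\cong\End_{A^e}(A/\pi^rA)$ (a bimodule map $A\to A/\pi^rA$ kills $\pi^rA$ and hence factors through $A/\pi^rA$), and under this identification $\Id_{A/\pi^rA}$ corresponds to the canonical surjection $p\colon A\to A/\pi^rA$, that is, to the $0$-cocycle $z=p\circ\eta$ with $z(a\ten b)=\overline{ab}$.

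I would then run the connecting-homomorphism recipe recalled just before the proposition, for the structure maps $\iota\colon A/\pi^rA\to(A/\pi^{2r}A)_\alpha$, $\bar w\mapsto\pi^r w+\pi^{2r}A$, and $q\colon(A/\pi^{2r}A)_\alpha\to A/\pi^rA$ the reduction modulo $\pi^r$ (that these are bimodule maps for the twisted right action is where $\alpha\equiv\Id$ modulo $\pi^r$ enters, and is routine). The one delicate point is the choice of a bimodule lift $y\colon P_0\to(A/\pi^{2r}A)_\alpha$ of $z$: because the right action on $(A/\pi^{2r}A)_\alpha$ is twisted by $\alpha$, the obvious assignment $a\ten b\mapsto\overline{ab}$ is \emph{not} a bimodule homomorphism. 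Instead I would take the lift determined by $1\ten 1\mapsto\bar 1$; since $P_0=A\ten A$ is free as an $A^e$-module on $1\ten 1$, this forces $y(a\ten b)=a\cdot\bar 1\cdot b=\overline{a\,\alpha(b)}$ in $A/\pi^{2r}A$, and indeed $q(y(a\ten b))=\overline{ab}=z(a\ten b)$.

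Applying the differential and using $\alpha(bc)=\alpha(b)\alpha(c)$ together with $b-\alpha(b)=-\pi^r\mu(b)$, I would compute in $A/\pi^{2r}A$
$$(y\circ d_1)(a\ten b\ten c)=\overline{ab\,\alpha(c)}-\overline{a\,\alpha(b)\,\alpha(c)}=\overline{a\,(b-\alpha(b))\,\alpha(c)}=-\pi^r\,\overline{a\,\mu(b)\,c},$$
where the last equality uses that multiplication by $\pi^r$ annihilates the $\pi^r\mu(c)$-correction in $\alpha(c)$ modulo $\pi^{2r}$. This element lies in $\Im(\iota)=\pi^rA/\pi^{2r}A$ and equals $\iota$ applied to the cochain $x\in\Hom_{A^e}(P_1,A/\pi^rA)$ given by $x(a\ten b\ten c)=-\overline{a\,\mu(b)\,c}$, i.e. to the cochain $b\mapsto-\overline{\mu(b)}$. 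Hence the connecting homomorphism sends $\Id_{A/\pi^rA}$ to the class of $\bar\mu$ in $HH^1(A/\pi^rA)$ (the overall sign being fixed by the conventions for the Hochschild cochain differential and the connecting map), which is the assertion.

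I expect the main obstacle to be precisely the construction of the lift $y$: recognising that the $\alpha$-twist on the right rules out the naive lift and forces the factor $\alpha(b)$, which is exactly what produces the derivation $\mu$ after differentiating. The remainder is the bookkeeping in $A/\pi^{2r}A$ — that after multiplication by $\pi^r$ the automorphism $\alpha$ may be replaced by the identity, so that $y\circ d_1$ indeed lands in the image of $\iota$ — and the routine tracking of the global sign, which does not affect the identification of the resulting class with $\bar\mu$.
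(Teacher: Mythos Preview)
Your proposal is correct and essentially identical to the paper's proof: both take $P$ to be the bar resolution, lift the $0$-cocycle representing $\Id_{A/\pi^rA}$ to the bimodule map $a\ten b\mapsto a\alpha(b)+\pi^{2r}A$ (the key step, which you correctly isolate as the main point), and compute the connecting homomorphism by precomposing with the bar differential in degree one. The only cosmetic difference is the sign convention---the paper takes the cochain differential to be precomposition with $-\delta_1$, so obtains $+\bar\mu$ directly---which you rightly flag as immaterial, and which the paper itself comments on after Proposition~\ref{automHH1-4}.
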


\begin{proof}
We choose for $P$ the canonical projective resolution of $A$, which in 
degree $n\geq$ $0$ is equal to $A^{\ten n+2}$, and which is zero in 
negative degree. The canonical quasi-isomorphism $P\to$ $A$ is given 
by the map $A\tenO A\to$ $A$ induced by multiplication in $A$. The last 
nonzero differential of $P$ is the map 
$\delta_1 : A^{\ten 3}\to$ $A^{\ten 2}$ sending $a\ten b\ten c$
to $ab\ten c- a\ten bc$, where $a$, $b$, $c\in$ $A$. Using a standard 
sign convention, the first nonzero differential of the cochain complex 
$\Hom_{A^e}(P,(A/\pi^{2r})_\alpha)$ is given by precomposing with 
$-\delta_1$. By the remarks from the beginning of the previous section, 
we have canonical identifications
$$H^0(\Hom_{A^e}(P,A/\pi^rA))=HH^0(A,A/\pi^rA)\cong HH^0(A/\pi^rA)
=\End_{A^e}(A/\pi^rA)$$
The element on the left side corresponding to the identity map 
$\Id_{A/\pi^rA}$ on the right side is represented by the 
$A^e$-homomorphism
$$\zeta : A\tenO A \to A/\pi^rA$$ 
given by $\zeta(a\ten b) = ab + \pi^rA$ for all $a$, $b\in$ $A$.
This map lifts to an $A^e$-homomorphism
$$\hat\zeta : A\tenO A \to (A/\pi^{2r}A)_\alpha$$
defined by $\hat\zeta(a\ten b) = a\alpha(b) + \pi^{2r}A$ for
all $a$, $b\in$ $A$, thanks to the fact that $\alpha$ induces the
identity on $A/\pi^rA$. By the construction of connecting homomorphisms 
in a long exact cohomology sequence of a short exact sequence of cochain 
complexes, as reviewed above, we need to apply to the $0$-cochain 
$\hat\zeta$ the first nonzero differential of the complex  
$\Hom_{A^e}(P,(A/\pi^{2r}A)_\alpha)$.
This differential is given by precomposing with $-\delta_1$, hence 
yields the $1$-coboundary 
$$-\hat\zeta\circ\delta_1 : A^{\ten 3}\to (A/\pi^{2r}A)_\alpha$$
For all $a$, $b$, $c\in$ $A$ we have
$$(-\hat\zeta\circ\delta_1)(a\ten b\ten c) =
-\hat\zeta(ab\ten c+a\ten bc)=-ab\alpha(c)+a\alpha(bc)=
a(\alpha(b)-b)\alpha(c)\ = \pi^r a \mu(b) \alpha(c)$$
Thus $-\hat\zeta\circ\delta_1$ is the image of the map
induced by $\mu$ under the homomorphism
$\Hom_{A^e}(P,A/\pi^rA) \to
\Hom_{A^e}(P,(A/\pi^{2r}A)_\alpha)$
from the short exact sequence in the statement. 
The interpretation of $HH^1(A/\pi^rA)$ in terms of derivations comes 
from restricting an $A^e$-homomorphism
$A\tenO A\tenO A\to$ $A/\pi^rA$ to $1\ten A \ten 1$.
This means that the 
element in $HH^1(A/\pi^r A)$ determined by $\mu$ is the image of 
$\Id_{A/\pi^r A}$ as claimed.
\end{proof}

We write as before $A^e=$ $A\tenO A^\op$ and use the analogous
notation for the quotients $A/\pi^rA$ of $A$. By well-known standard
facts relating $\Ext$ and extensions, the representations of an 
element in $HH^1(A)$ by a short exact sequence and by a $1$-cycle 
$\varphi : A^{\ten 3}\to$ $A$ are related via a commutative diagram 
with exact rows
$$\xymatrix{\cdots\ar[r]
&A\tenO A\tenO A\ar[r]^{\delta_1}\ar[d]_{\varphi} 
& A\tenO A\ar[r]^{\delta_0}\ar[d]^{\sigma} 
& A\ar[r]\ar@{=}[d] & 0\\
0\ar[r]&A\ar[r]_{\iota} & X\ar[r] & A\ar[r]& 0}$$
The derivation $\mu$ representing this element is obtained
from restricting $\varphi$ to $1\ten A\ten 1$. Using 
$\delta_1(1\ten a\ten 1)=$ $a\ten 1-1\ten a$ we get that 
$\sigma(a\ten 1-1\ten a)=$ $\iota(\mu(a))$. We have a decomposition $X=$ 
$\iota(A)\oplus \sigma(A\ten 1)$ as a direct sum of left $A$-modules, 
both of which are isomorphic to $A$ as left $A$-modules. The first 
copy, $\iota(A)$, is isomorphic to $A$ as an $A^e$-module, but the 
second is not a bimodule: the right action on $\sigma(A\ten 1)$ is 
determined for $a\in$ $A$, by
$$\sigma(1\ten 1)\cdot a= \sigma(1\ten a)=
\sigma(a\ten 1)+\sigma(1\ten a)-\sigma(a\ten 1)=
\sigma(a\ten 1)-\iota(\mu(a))$$
In other words, after identifying $X=$ $A\oplus A$ as left
$A$-modules, the first copy of $A$ is an $A$-$A$-subbimodule, and the 
right action on the second copy is given by $(0,1)a=$ $(-\mu(a),a)$.
These considersations remain true for arbitrary commutative base rings,
so long as the algebra is projective over its base ring (that is,
we can apply these considerations to the $\CO/\pi^r\CO$-algebras
$A/\pi^rA$, for $r$ any positive integer). The following
result interprets the class of an integrable derivation as the
image of an element in the appropriate $\Ext$-module determined
by the first short exact sequence in the previous proposition.

\begin{Proposition} \label{automHH1-4}
Let $A$ be an $\CO$-algebra which is free of finite rank as an
$\CO$-module. Let $r$ be a positive integer. Suppose that the canonical 
map $Z(A)\to$ $Z(A/\pi^r A)$ is surjective. Let $\alpha\in$ $\Aut_r(A)$, 
and let $\mu : A\to$ $A$  be the unique linear map satisfying
$\alpha(a)=$ $a + \pi^r\mu(a)$ for all $a\in$ $A$. 
The exact sequence of $(A/\pi^{2r}A)^e$-modules 
$$\xymatrix{0\ar[r]&A/\pi^rA\ar[r]&(A/\pi^{2r}A)_\alpha\ar[r]&A/\pi^rA
\ar[r] & 0}$$
determines an element $\eta(\alpha)$ in 
$\Ext^1_{(A/\pi^{2r}A)^e}(A/\pi^rA, A/\pi^rA)$. 
The canonical surjective homomorphism $A/\pi^{2r}A\to$ $A/\pi^rA$ induces
a homomorphism
$$\xymatrix{\Ext^1_{(A/\pi^{2r}A)^e}(A/\pi^rA, A/\pi^rA)\ar[rr]
& & \Ext^1_{(A/\pi^{2r}A)^e}(A/\pi^{2r}A, A/\pi^rA)\cong
HH^1(A/\pi^rA)}$$
which sends $\eta(\alpha)$ to the class of the derivation induced
by $-\mu$.
\end{Proposition}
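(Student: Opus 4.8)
The plan is to trace the extension class $\eta(\alpha)$ through the given map and identify its image explicitly, using the dictionary between $\Ext^1$-classes and short exact sequences set up in the paragraph preceding the statement. The key observation is that the right-hand map is induced by precomposition with the canonical surjection $\rho : A/\pi^{2r}A \to A/\pi^rA$, so that the image of $\eta(\alpha)$ is obtained by pulling back the given short exact sequence along $\rho$. Concretely, I would form the pullback of
$$\xymatrix{0\ar[r]&A/\pi^rA\ar[r]&(A/\pi^{2r}A)_\alpha\ar[r]&A/\pi^rA\ar[r]&0}$$
along $\rho$, obtaining a short exact sequence of $(A/\pi^{2r}A)^e$-modules with middle term a pullback bimodule and right-hand term $A/\pi^{2r}A$. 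Under the identification $\Ext^1_{(A/\pi^{2r}A)^e}(A/\pi^{2r}A, A/\pi^rA) \cong HH^1(A/\pi^rA)$, this pulled-back extension corresponds to a derivation on $A/\pi^rA$, and the goal is to check that this derivation is $-\bar\mu$.

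The next step is to recover the derivation from the pulled-back extension using exactly the recipe recorded just before the statement: given an $A^e$-extension realized on $A \oplus A$ as left modules, the derivation $\mu'$ is read off from the formula $(0,1)a = (-\mu'(a), a)$ for the right action. First I would fix an explicit splitting of the pullback sequence as left $A/\pi^{2r}A$-modules, compute the resulting right action on the complementary copy of $A/\pi^rA$, and extract the associated derivation. The map $\hat\zeta(a\ten b)=a\alpha(b)+\pi^{2r}A$ from Proposition \ref{automHH1-3} gives the natural lift realizing the section, and the defining relation $\alpha(a)=a+\pi^r\mu(a)$ should convert the right-action discrepancy directly into a term involving $\bar\mu$, with the sign dictated by the convention in the displayed formula $(0,1)a=(-\mu(a),a)$. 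This is where the claimed sign $-\mu$ enters.

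Alternatively, and perhaps more cleanly, I would connect this directly to Proposition \ref{automHH1-3}: that result already identifies the connecting homomorphism associated to the \emph{same} short exact sequence, applied to $\Id_{A/\pi^rA}$, with the class of $+\bar\mu$. The relationship between the two statements is the standard compatibility between the connecting homomorphism in the long exact $\Ext$-sequence (computed via a projective resolution $P$ of $A$) and the pullback description of the $\Ext^1$ group (computed via extensions). Since the map in the present proposition factors the extension class through $\rho$ rather than through $\Id_{A/\pi^rA}$, the two constructions differ by the duality between the covariant connecting map on $\Hom_{A^e}(P,-)$ and the contravariant pullback on extensions; this duality introduces precisely one sign, accounting for the passage from $+\bar\mu$ to $-\bar\mu$. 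I would make this explicit by comparing the $1$-coboundary $-\hat\zeta\circ\delta_1$ computed in \ref{automHH1-3}, which evaluates to $\pi^r a\mu(b)\alpha(c)$, against the coboundary governing the pullback extension.

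The main obstacle I anticipate is bookkeeping the sign correctly and ensuring the two $\Ext^1$ identifications are compatible as oriented isomorphisms rather than merely as abstract isomorphisms of $\CO$-modules. The danger is that an implicit sign in the identification $\Ext^1_{(A/\pi^{2r}A)^e}(A/\pi^{2r}A,A/\pi^rA)\cong HH^1(A/\pi^rA)$, or in the standard sign convention for the differential on $\Hom_{A^e}(P,-)$ used in \ref{automHH1-3}, could silently cancel or double the expected sign. The safest route is to avoid reasoning about signs abstractly and instead carry the explicit cocycle $\hat\zeta$ and the explicit differential $-\delta_1$ through both constructions, so that the sign emerges from a single honest computation of $(-\hat\zeta\circ\delta_1)(a\ten b\ten c)$ restricted to $1\ten A\ten 1$, rather than from comparing conventions.
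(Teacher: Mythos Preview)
Your first approach---form the pullback $X$ along $\rho$, reduce modulo $\pi^r$, split $X/\pi^rX$ as a left module, and read off the derivation from the right action via the convention $(0,1)a=(-\mu'(a),a)$---is exactly what the paper does. The only adjustment is that the natural left-module section in the pullback picture is the diagonal $v\mapsto(v,v)$ rather than the map $\hat\zeta$ from Proposition~\ref{automHH1-3} (which belongs to the resolution picture); with the diagonal section the computation $(1,1)\cdot a=(\alpha(a),a)=(a,a)+(\pi^r\mu(a),0)$ runs as you anticipate and yields $-\bar\mu$.
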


\begin{proof}
We consider the following commutative diagram with exact rows.
$$\xymatrix{
0\ar[r]&A/\pi^rA\ar[r]\ar@{=}[d]&(A/\pi^{2r}A)_\alpha\ar[r]
&A/\pi^rA\ar[r] &0\\
0\ar[r]&A/\pi^rA\ar[r]\ar@{=}[d]& X\ar[u] \ar[d]\ar[r]
&A/\pi^{2r}A\ar[r]\ar[u]\ar[d] &0\\
0\ar[r]&A/\pi^rA\ar[r]& X/\pi^rX \ar[r]
&A/\pi^{r}A\ar[r] & 0 }$$
The first row in this diagram is the canonical exact sequence 
representing the element $\eta(\alpha)$ in 
$\Ext^1_{(A/\pi^{2r}A)^e}(A/\pi^rA, A/\pi^rA)$ as in the statement.
The second row is obtained from the first by taking for $X$ the
pullback of the two canonical maps from $(A/\pi^{2r}A)_\alpha$ and
$A/\pi^{2r}A$ to $A/\pi^rA$. Thus the second row represents
the image of $\eta(\alpha)$ in
$\Ext^1_{(A/\pi^{2r}A)^e}(A/\pi^{2r}A, A/\pi^rA)$ as in the
statement. The third row represents the image of $\eta(\alpha)$
in $HH^1(A/\pi^rA)$ under the isomorphism
$\Ext^1_{(A/\pi^{2r}A)^e}(A/\pi^{2r}A, A/\pi^rA)\cong$ $HH^1(A/\pi^rA)$
from \ref{HHnrs}, applied with $s=2r$ and $n=1$. We have
$$X = \{(u+\pi^{2r}A, v+\pi^{2r}A)\ |\ u,v\in A,\ u-v\in\pi^rA\}$$
and the maps from $X$ to $(A/\pi^{2r}A)_\alpha$ and to 
$A/\pi^{2r}A$ are induced by the canonical projections.
The map $A/\pi^rA\to$ $X$ in the diagram sends $a+\pi^rA$ to
$(\pi^ra+\pi^{2r}A,0+\pi^{2r}A)$. An easy verification shows that
$$\pi^rX = \{(u+\pi^{2r}A, u+\pi^{2r}A)\ |\ u\in\pi^r A\}$$ 
The vertical maps from the second to the third row are
the canonical surjections. 
Denote by $Y$ the image of the map $A/\pi^rA\to$ $X$ in the diagram.
Set $$Z=\{(u+\pi^{2r}A, u+\pi^{2r}A)\ |\ u\in A\}$$
Note that $\pi^rZ=$ $\pi^rX$. Thus
the images of $Y$ and $Z$ in $X/\pi^rX$ yield a direct sum decomposition
of $X/\pi^rX$ as a left $A/\pi^rA$-module of two copies
isomorphic to $A/\pi^rA$. This is the decomposition as described in the
paragraph before the statement: the first of these two copies is
a bimodule, but the second is not. Moreover, the right action by $A$
on the second copy detects the negative of the derivation corresponding
to the element of $HH^1(A/\pi^rA)$ determined by the third row of
the above diagram. This right action by $a\in$ $A$ 
on the image in $X/\pi^rX$ of the element $(u+\pi^{2r}A,u+\pi^{2r}A)\in$ 
$Z$ is represented by the element in $X$ of the form
$$(u\alpha(a)+\pi^{2r}A, ua+\pi^{2r}A) =
(ua+\pi^{2r}A,ua+\pi^{2r}A) + (\pi^ru\mu(a) +\pi^{2r}A, 0+\pi^{2r}A)$$
The image in $X/\pi^rX$ of the second element on the right side is the 
image of $(u\mu(a)+\pi^rA)$ under the map $A/\pi^rA\to$ $X/\pi^rX$. The 
result follows from the identification between derivations and short 
exact sequences representing elements in $HH^1(A/\pi^rA)$ as described in
the paragraph preceding this proposition.
\end{proof}

The fact that we obtain the class represented by $\mu$ in Proposition 
\ref{automHH1-3} and the class represented by $-\mu$ in Proposition 
\ref{automHH1-4} is essentially a matter of sign conventions. For the 
sake of completeness, we mention that Bockstein homomorphisms in 
Hochschild cohomology are graded derivations, just as in the case of 
group cohomology (cf. \cite[Lemma 4.3.3]{BenII}), for instance. We 
sketch a proof for the convenience of the reader. As before, we will 
use without further mention the canonical isomorphisms 
$HH^n(A;A/\pi^rA)\cong$ $HH^n(A/\pi^rA)$ for any integers $n\geq$ $0$, 
$r>0$, and any $\CO$-algebra $A$ which is free of finite rank as an 
$\CO$-module.

\begin{Proposition} \label{Bocksteinderivation}
Let $A$ be an $\CO$-algebra which is free of finite rank as an
$\CO$-module. Let $r$ be a positive integer, and set $\bar A=$ 
$A/\pi^r A$. Let $P$ be a projective resolution of $A$ as an 
$A^e$-module. For any nonnegative integer $n$ denote by 
$\beta_n : HH^n(\bar A)\to$ $HH^{n+1}(\bar A)$ the connecting 
homomorphism in the long exact cohomology sequence of the canonical 
short exact sequence of cochain complexes
$$\xymatrix{0\ar[r]&\Hom_{A^e}(P,\bar A)\ar[r] 
& \Hom_{A^e}(P,A/\pi^{2r} A)\ar[r] 
& \Hom_{A^e}(P,\bar A)\ar[r]& 0}$$ 
Then for any nonnegative integers $m$, $n$, any $\zeta\in$ 
$HH^m(\bar A)$ and any $\eta\in$ $HH^n(\bar A)$ we have
$$\beta_{m+n}(\zeta\eta) = \beta_m(\zeta)\eta + 
(-1)^m\zeta\beta_n(\eta)\ .$$
\end{Proposition}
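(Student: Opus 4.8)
The plan is to reduce the statement to a cochain-level Leibniz identity, following the classical argument for the Bockstein in group cohomology (as in \cite[Lemma 4.3.3]{BenII}). First I would fix a diagonal approximation $\Delta : P\to P\tenA P$, that is, a chain map lifting the canonical isomorphism $A\tenA A\cong$ $A$, where $P\tenA P$ is the totalisation of the evident double complex and resolves $A$ as an $A^e$-module. For any two quotient algebras $A/\pi^sA$, $A/\pi^tA$ of $A$ and cochains $\phi$, $\psi$ valued in them, setting $\phi\cup\psi=$ $m\circ(\phi\tenA\psi)\circ\Delta$, with $m$ induced by multiplication, equips $\bigoplus_s\Hom_{A^e}(P,A/\pi^sA)$ with compatible cup products; under the identifications of \ref{HHnrs} these compute the cup products on $HH^*(A/\pi^sA)$, and they commute with the reduction maps $A/\pi^sA\to$ $A/\pi^tA$ for $s\geq t$. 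The cochain-level Leibniz rule $d(\phi\cup\psi)=$ $d\phi\cup\psi+(-1)^{|\phi|}\phi\cup d\psi$ is then formal.

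Next I would represent $\zeta$ and $\eta$ by cocycles $f\in$ $\Hom_{A^e}(P_m,\bar A)$ and $g\in$ $\Hom_{A^e}(P_n,\bar A)$, so that $f\cup g$ represents $\zeta\eta$. Since $P$ is projective, choose lifts $\tilde f$, $\tilde g$ of $f$, $g$ along the surjection $A/\pi^{2r}A\to$ $\bar A$. By the very construction of the connecting homomorphism recalled at the start of the previous section, $d\tilde f$ and $d\tilde g$ take values in the kernel of $A/\pi^{2r}A\to$ $\bar A$, which is the image of the map $\bar A\to$ $A/\pi^{2r}A$ given by multiplication with $\pi^r$; writing $d\tilde f=$ $\pi^r u$ and $d\tilde g=$ $\pi^r v$ with $u$, $v$ valued in $\bar A$, the classes $[u]$ and $[v]$ are $\beta_m(\zeta)$ and $\beta_n(\eta)$.

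Now $\tilde f\cup\tilde g$ is a lift of $f\cup g$, and the cochain Leibniz rule gives $d(\tilde f\cup\tilde g)=$ $\pi^r u\cup\tilde g+(-1)^m\tilde f\cup\pi^r v$. The crucial point is the identity $\pi^r u\cup\tilde g=$ $\pi^r(u\cup g)$, and similarly $\tilde f\cup\pi^r v=$ $\pi^r(f\cup v)$: these hold because in $A/\pi^{2r}A$ one has $(\pi^r a)b=$ $\pi^r(ab)$, and $\pi^r(ab)$ depends only on the residues of $a$ and $b$ modulo $\pi^r$, so multiplying by $\pi^r$ converts the $A/\pi^{2r}A$-valued product into $\pi^r$ times the $\bar A$-valued product. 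Hence $d(\tilde f\cup\tilde g)=$ $\pi^r\bigl(u\cup g+(-1)^m f\cup v\bigr)$. Applying the connecting homomorphism to the lift $\tilde f\cup\tilde g$ of $f\cup g$, and using that the map $\bar A\to$ $A/\pi^{2r}A$ given by multiplication with $\pi^r$ is injective, so that the factor $\pi^r$ may be cancelled on cochains valued in $\bar A$, I conclude that $\beta_{m+n}(\zeta\eta)$ is represented by $u\cup g+(-1)^m f\cup v$, which is exactly $\beta_m(\zeta)\eta+(-1)^m\zeta\beta_n(\eta)$.

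The main obstacle is the first paragraph: one must exhibit a single multiplicative cochain model that works simultaneously over all quotients $A/\pi^sA$ and is strictly compatible both with the reduction maps and with the identifications $HH^n(A;A/\pi^sA)\cong$ $HH^n(A/\pi^sA)$, so that a lift of a cup product is genuinely the cup product of lifts. Once this naturality is secured, the remainder is the standard bookkeeping of the connecting homomorphism together with the elementary identity $(\pi^r a)b=$ $\pi^r(ab)$ in $A/\pi^{2r}A$.
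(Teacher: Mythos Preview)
Your proposal is correct and follows essentially the same route as the paper: fix a diagonal approximation $P\to P\tenA P$, lift the cocycles representing $\zeta$ and $\eta$ to cochains valued in $A/\pi^{2r}A$, use the Leibniz rule for the differential on the cup product of the lifts, and identify the resulting terms with $\pi^r$ times the cocycles representing $\beta_m(\zeta)\eta$ and $\zeta\beta_n(\eta)$. The ``obstacle'' you flag in your last paragraph is handled exactly as you suggest and as the paper does, namely by working throughout with a single projective resolution $P$ of $A$ over $A^e$ and using the identifications of \ref{HHnrs}, so no genuine difficulty remains.
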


\begin{proof}
Denote by $\epsilon_n : P_n\to$ $P_{n-1}$ the differential of $P$
for $n>0$, and by $\epsilon_0 : P_0\to$ $A$ a map with kernel
$\Im(\epsilon_1)$ inducing a quasi-isomorphism $P\to$ $A$.
We have $A\tenA A\cong$ $A$, and hence $P\tenA P$ is also a
projective resolution of $A$. Thus there is a homotopy
equivalence $h : P\simeq P\tenA P$ which lifts the identity
on $A$. The homotopy class of $h$ with this property is unique.
The term in degree $n\geq$ $0$ of $P\tenA P$ is the direct sum
of the terms $P_i\tenA P_j$, with $i$, $j$ running over all
nonnegative integers  such that $i+j=$ $n$. The differential
$\hat\epsilon$ of $P\tenA P$ is obtained by taking the sum of the maps
$(-1)^i\Id\ten\epsilon_j : P_i\tenA P_j\to$ $P_i\tenA P_{j-1}$  and
$\epsilon_i\ten\Id : P_i\tenA P_j\to$ $P_{i-1}\tenA P_j$.

Let $m$, $n$ be nonnegative integers, $\zeta\in$ $HH^m(\bar A)$ 
and $\eta\in$ $HH^n(\bar A)$. These classes are represented
by cocycles, abusively denoted by the same letters, 
$\zeta : P_m\to$ $\bar A$ and $\eta : P_n\to$ $\bar A$. 
The product in $HH^*(\bar A)$ is induced by the map sending
$\zeta$ and $\eta$ to the cocycle $(\zeta\ten\eta)\circ h_{m+n}$.
Via the canonical surjection $A/\pi^{2r}A\to$ $\bar A$, we lift the 
cocycles $\zeta$ and $\eta$ to cocycles $\hat\zeta : P_m\to$ 
$A/\pi^{2r}A$ and $\hat\eta : P_n\to$ $A/\pi^{2r}A$. 
The cocycle
$$\widehat{\zeta\eta} = (\hat\zeta\ten\hat\eta)\circ h_{m+n} :
P_{m+n} \to A/\pi^{2r}A\ $$
lifts then a cocycle $P_{m+n}\to $ $\bar A$ which represents
the product $\zeta\eta\in$ $HH^{m+n}(\bar A)$.

Using these cocycles, we calculate the images under the Bockstein 
homomorphisms of $\zeta$, $\eta$, and $\zeta\eta$.
The connecting homomorphism $\beta_m$ sends $\zeta$ to
the class of a cocycle $\tilde\zeta : P_{m+1}\to$ $\bar A$
which upon identifying $\bar A$ with its image $\pi^rA/\pi^{2r}A$ in
$A/\pi^{2r}$ is equal to 
$$\pi^r\hat\zeta\circ\epsilon_{m+1} : P_{m+1}\to \bar A\ .$$
Similarly, $\beta_n$ sends $\eta$ to the class of a cocycle
$\tilde\eta : P_{n+1}\to$ $\bar A$ equal to 
$\pi^r\hat\eta\circ\epsilon_{n+1}$, with the analogous identification.
The connecting homomorphism $\beta_{m+n}$ sends $\zeta\eta$
to a class in $HH^{m+n+1}(A)$ represented by
a cocycle $P_{m+n+1}\to$ $\bar A$  which upon identifying
$\bar A$ with its image $\pi^rA/\pi^{2r}A$ in $A/\pi^{2r}A$ is
equal to the cocycle 
$$\pi^r(\hat\zeta\ten\hat\eta)\circ\hat\epsilon_{m+n+1}
\circ h_{m+n+1}\ .$$
In view of the explicit description of the differential of $P\tenA P$
above, this expression is equal to
$$\pi^r(\hat\zeta\circ\epsilon_{m+1}\ten\hat\eta +
(-1)^m\hat\zeta\ten\hat\eta\circ\epsilon_{n+1})\circ h_{m+n+1} =
(\pi^r\hat\zeta\circ\epsilon_{m+1}\ten\hat\eta +
(-1)^m\hat\zeta\ten\pi^r\hat\eta\circ\epsilon_{n+1})
\circ h_{m+n+1}\ .$$
By the above description of the images of $\zeta$ and $\eta$
under Bockstein homomorphisms, this is equal to 
$$(\tilde\zeta \ten\hat\eta +
(-1)^m\hat\zeta\ten\tilde\eta)\circ h_{m+n+1}\ $$
This represents $\beta(\zeta)\eta + (-1)^m\zeta\beta(\eta)$
as required.
\end{proof}

%%%%%%%%%%%%%%%%%%%%%%%%%%%%%%%%%%%%%%%%%%%%%%%%%%%%%%%%%%%%%%%%%%%%%%
\section{Integrable derivations and stable equivalences}

Let $A$ be an $\CO$-algebra which is free of finite rank as
an $\CO$-module. The group $\Out(A)$ need not be invariant under 
Morita equivalences, but the subgroups $\Out_r(A)$ are invariant 
under Morita equivalences. Indeed, if $\alpha\in$ $\Out_r(A)$ for 
some $r>0$, then $\alpha$ induces in particular the identity on 
$A/\pi A$, hence stabilises the isomorphism class of any simple
$A/\pi A$-module, hence of any projective indecomposable $A$-module.
The subgroup of $\Out(A)$ represented by those automorphisms
which stabilise all projective indecomposable modules is
well-known to be invariant under Morita equivalences, and thus so 
are the groups $\Out_r(A)$.
More explicitly, if $A$, $B$ are two $\CO$-free algebras
of finite $\CO$-rank and if $M$ is an $A$-$B$-bimodule
inducing a Morita equivalence, then for any $\alpha\in$
$\Aut_r(A)$ there is $\beta\in$ $\Aut_r(B)$ such that
${_{\alpha^{-1}}{M}}\cong$ $M_\beta$ as $A$-$B$-bimodules,
and the correspondence $\alpha\mapsto\beta$ yields a
group isomorphism $\Out_r(A)\cong$ $\Out_r(B)$. As before, the use of 
automorphisms as subscripts to modules and bimodules is as 
in \ref{aut-notation} above.
We use without further reference some standard facts relating 
bimodules and automorphisms (see e. g. \cite[\S 55A]{CR2}): if 
$\alpha\in$ $\Aut(A)$, then the $A$-$A$-bimodule ${{A}_\alpha}$ induces 
a Morita equivalence on $A$, we have $A_\alpha\cong$ $A$ as 
$A$-$A$-bimodules if and only if $\alpha$ is inner, the map sending 
$a\in$ $A$ to $\alpha(a)$ is an $A$-$A$-bimodule isomorphism 
${_{\alpha^{-1}}{A}} \cong$ ${{A}_\alpha}$, and the map sending
$\alpha$ to $A_\alpha$ induces an injective group homomophism
from $\Out(A)$ to the Picard group $\Pic(A)$ of isomorphism
classes of bimodules inducing Morita equivalences on $A$.
We will show next that if the involved algebras are relatively 
$\CO$-selfinjective with separable semisimple quotients, then the
groups $\Out_r(A)$ are in fact invariant under stable equivalences of 
Morita type in a way which is compatible with the associated
groups of integrable derivations. Symmetric $\CO$-algebras
remain symmetric over $k$, hence selfinjective, and thus
Theorem \ref{integrableinvariant} is a special case of the
following result.

\begin{Theorem} \label{outmstable}
Let $A$, $B$ be $\CO$-algebras which are free of finite rank
as $\CO$-modules, such that the $k$-algebras  $k\tenO A$
and $k\tenO B$ are indecomposable nonsimple selfinjective with 
separable semisimple quotients. Let $r$ be a positive integer.
Suppose that the canonical maps $Z(A)\to$ $Z(A/\pi^rA)$ and 
$Z(B)\to$ $Z(B/\pi^rB)$ are surjective. 
Let $M$ be an $A$-$B$-bimodule and $N$ a $B$-$A$-bimodule inducing 
a stable equivalence of Morita type between $A$ and $B$. 
For any $\alpha\in$ $\Aut_r(A)$ there is $\beta\in$ 
$\Aut_r(B)$ such that ${_{\alpha^{-1}}{M}}\cong$ $M_\beta$ as 
$A$-$B$-bimodules, and the correspondence $\alpha\mapsto$ $\beta$ 
induces a group isomorphism $\Out_r(A)\cong$ $\Out_r(B)$ making the 
following diagram of groups commutative:
$$\xymatrix{\Out_r(A) \ar[rr]^{\cong}\ar[d] && \Out_r(B) \ar[d] \\
HH^1_A(A/\pi^r A) \ar[rr]^{\cong} & & HH^1_B(B/\pi^r B) }\ $$
where the vertical maps are the group homomorphisms from
Proposition \ref{automHH1-2} (ii) and where the lower horizontal 
isomorphism is induced by the functor $N\tenA - \tenA M$.
\end{Theorem}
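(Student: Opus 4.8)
The plan is to transport $\alpha$ across the stable equivalence at the level of bimodules, to obtain $\beta$ from a one-sided freeness argument, and then to identify the transferred derivation class by means of the extension/Bockstein description of Proposition~\ref{automHH1-4}. Throughout I may assume, after discarding relatively $\CO$-projective summands, that neither $M$ nor $N$ has a relatively $\CO$-projective direct summand. Recall from Notation~\ref{aut-notation} that ${_{\alpha^{-1}}M}\cong A_\alpha\tenA M$, and set $W=N\tenA{_{\alpha^{-1}}M}$. Since $\alpha\in\Aut_r(A)\subseteq\Aut_1(A)$ induces the identity on $k\tenO A$, we have $k\tenO{_{\alpha^{-1}}M}\cong k\tenO M$, and hence $k\tenO W\cong k\tenO(N\tenA M)\cong k\tenO B\oplus(\mbox{projective})$. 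The bimodule $k\tenO B$ is indecomposable (by hypothesis) and non-projective (were it projective as a bimodule it would be separable, hence simple, contrary to the nonsimplicity hypothesis). Thus the Krull--Schmidt theorem together with the unique lifting of projective bimodules yields $W\cong W_0\oplus(\mbox{projective})$ with $W_0$ indecomposable and $k\tenO W_0\cong k\tenO B$.

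The key observation is that $k\tenO W_0\cong k\tenO B$ is free of rank one as a right $k\tenO B$-module; since $W_0$ is $\CO$-free, it is free of rank one as a right $B$-module. Fixing a basis $w$ and writing $bw=w\beta(b)$ for $b\in B$ defines an $\CO$-algebra endomorphism $\beta$ of $B$ which reduces to the identity modulo $\pi$, hence lies in $\Aut_1(B)$, and by construction $W_0\cong B_\beta$ (after possibly replacing $\beta$ by its inverse). A change of basis alters $\beta$ by an inner automorphism, so $[\beta]\in\Out(B)$ is well defined. Running the same computation modulo $\pi^r$ shows that the non-projective part of $W/\pi^rW$ is $B/\pi^rB$ with trivial twist, so $\beta$ induces an inner automorphism of $B/\pi^rB$; by Lemma~\ref{OutrLemma} the class $[\beta]$ then lies in $\Out_r(B)$ and has a representative in $\Aut_r(B)$.

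To upgrade this to the asserted isomorphism, apply $M\tenB-$ to $W\cong B_\beta\oplus(\mbox{projective})$ and use $M\tenB N\cong A\oplus X$ with $X$ projective, giving ${_{\alpha^{-1}}M}\oplus(X\tenA{_{\alpha^{-1}}M})\cong M_\beta\oplus(M\tenB(\mbox{projective}))$. A projective $A^e$- or $B^e$-module tensored with a bimodule that is projective on the relevant side is relatively $\CO$-projective, so both parenthesised terms are relatively $\CO$-projective; as twisting by automorphisms preserves the absence of relatively $\CO$-projective summands, both ${_{\alpha^{-1}}M}$ and $M_\beta$ have none, and the Krull--Schmidt theorem forces ${_{\alpha^{-1}}M}\cong M_\beta$. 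That $[\alpha]\mapsto[\beta]$ is a group homomorphism follows by inserting a copy of $M\tenB N\cong A\oplus X$ to identify $N\tenA A_{\alpha_1\alpha_2}\tenA M$, up to projective summands, with $(N\tenA A_{\alpha_1}\tenA M)\tenB(N\tenA A_{\alpha_2}\tenA M)$; the construction with the roles of $M$ and $N$ exchanged gives the inverse homomorphism, whence $\Out_r(A)\cong\Out_r(B)$.

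It remains to prove that the square commutes, which is the heart of the matter. By Proposition~\ref{automHH1-4} the image of $[\alpha]$ under the left vertical map is, up to sign, the image of the extension class of $0\to A/\pi^rA\to(A/\pi^{2r}A)_\alpha\to A/\pi^rA\to 0$ under the map $\Ext^1_{(A/\pi^{2r}A)^e}(A/\pi^rA,A/\pi^rA)\to HH^1(A/\pi^rA)$, and likewise for $[\beta]$. Reducing ${_{\alpha^{-1}}M}\cong M_\beta$ modulo $\pi^{2r}$ identifies $N\tenA(A/\pi^{2r}A)_\alpha\tenA M$ with $(B/\pi^{2r}B)_\beta$ up to a relatively $\CO$-projective bimodule, so the reduction modulo $\pi^{2r}$ of the transfer functor $N\tenA-\tenA M$ carries the extension defining $\eta(\alpha)$ to the one defining $\eta(\beta)$, modulo projective summands. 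The main obstacle is to verify that this is compatible with the passage to $HH^1$, that is, that the connecting (Bockstein) homomorphisms commute with the transfer map; here I would invoke the naturality of connecting homomorphisms and use Lemma~\ref{selfinjLemma} to lift the relevant cocycles and bimodule homomorphisms through the projective error terms from $A/\pi^rA$ to $A/\pi^{2r}A$, the surjectivity of $Z(A)\to Z(A/\pi^rA)$ ensuring that Propositions~\ref{automHH1-2} and~\ref{automHH1-4} apply. Identifying the vertical maps with those of Proposition~\ref{automHH1-2}(ii) then yields the commutative diagram and exhibits the lower isomorphism as the restriction of $N\tenA-\tenA M$.
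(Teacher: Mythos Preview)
Your construction of $\beta$ and the isomorphism $\Out_r(A)\cong\Out_r(B)$ is essentially correct and parallels Lemma~\ref{aMbLemma1}; your rank-one argument is a direct substitute for the paper's appeal to \cite[Theorem~2.1]{Listable}. One small slip: from $k\tenO W_0\cong k\tenO B$ you only get that $\bar\beta$ is \emph{inner} on $k\tenO B$, not the identity, so an adjustment by an inner automorphism is needed before concluding $\beta\in\Aut_1(B)$.

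The genuine gap is in the commutativity of the square. Knowing merely that ${_{\alpha^{-1}}{M}}\cong M_\beta$ tells you the middle terms of
\[
0\to\bar M\to{_{\alpha^{-1}}{(M/\pi^{2r}M)}}\to\bar M\to 0
\qquad\text{and}\qquad
0\to\bar M\to(M/\pi^{2r}M)_\beta\to\bar M\to 0
\]
are isomorphic, but \emph{not} that these extensions are equivalent: an arbitrary bimodule isomorphism $\psi$ induces an arbitrary automorphism $\bar\psi$ of $\bar M$, and the two extension classes are related by the action of $\bar\psi$, not equal. What you are missing is precisely the paper's Lemma~\ref{aMbLemma2}: one can choose $\psi$ so that $\bar\psi=\Id_{\bar M}$. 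Its proof first uses the surjectivity of $Z(A)\to Z(\bar A)$ (via $Z_\st(\bar A)\cong\Endbar_{\bar A\ten_{\bar\CO}\bar B^\op}(\bar M)$) to correct $\bar\psi$ to the identity in the \emph{stable} endomorphism ring of $\bar M$, and then Lemma~\ref{selfinjLemma} to lift the remaining, now projectively-factoring, discrepancy $\bar\psi-\Id_{\bar M}$ back to $\CO$ and subtract it from $\psi$. You invoke both ingredients in your final paragraph, but for a different purpose (handling the summands $X,Y$), and you never articulate this normalisation; without it, the claim that the transfer ``carries the extension defining $\eta(\alpha)$ to the one defining $\eta(\beta)$'' is unjustified. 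Once Lemma~\ref{aMbLemma2} is in hand, the paper also proceeds more cleanly than your outline: rather than pushing through $N\tenA-\tenA M$ and then projecting away projective error terms, it applies $-\tenA M$ and $M\tenB-$ separately, compares the images of $\bar\mu$ and $\bar\nu$ in $\Ext^1_{\bar A\ten_{\bar\CO}\bar B^\op}(\bar M,\bar M)$, and identifies both with the image of $\Id_{\bar M}$ under the common connecting homomorphism via Proposition~\ref{automHH1-3}.
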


If $M$ and $N$ induce a Morita equivalence, then the hypothesis on
$k\tenO A$ and $k\tenO B$ being selfinjective with separable semisimple
quotients is not needed; see the Remark \ref{MoritaRemark} below for the 
necessary adjustments. 
This yields an alternative proof of a result due to  Farkas, Geiss, 
and Marcos in \cite{FGM}. The first part of the statement of 
\ref{outmstable} is the following variation of \cite[Theorem 4.2]{Listable}.

\begin{Lemma} \label{aMbLemma1}
With the notation and hypotheses of \ref{outmstable},
for any $\alpha\in$ $\Aut_r(A)$ there is $\beta\in$ 
$\Aut_r(B)$ such that ${_{\alpha^{-1}}{M}}\cong$ $M_\beta$ as 
$A$-$B$-bimodules, and the correspondence $\alpha\mapsto$ $\beta$ 
induces a group isomorphism $\Out_r(A)\cong$ $\Out_r(B)$.
\end{Lemma}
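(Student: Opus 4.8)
The plan is to follow the classical Morita argument recalled above and carried out in \cite[Theorem 4.2]{Listable}, adding the bookkeeping that records the level $r$. First I would reduce to the case in which $M$ has no nonzero relatively $\CO$-projective direct summand as an $A$-$B$-bimodule. Writing $M\cong M_0\oplus P$ with $P$ relatively $\CO$-projective and $M_0$ without such a summand, the bimodules $M_0$ and the corresponding summand $N_0$ of $N$ still induce a stable equivalence of Morita type. The summand $P$ causes no difficulty: since $\alpha\in\Aut_r(A)\subseteq\Aut_1(A)$ and (once constructed) $\beta\in\Aut_1(B)$ act as the identity modulo $\pi$, the relevant twisting functors fix every isomorphism class of indecomposable summand of a relatively $\CO$-projective bimodule, so that $_{\alpha^{-1}}P\cong P\cong P_\beta$ by Krull--Schmidt over the complete ring $\CO$. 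The essential point, for which the hypothesis that $k\tenO A$ and $k\tenO B$ are indecomposable nonsimple selfinjective is used, is that $M_0$ is indecomposable as an $A$-$B$-bimodule and is, up to isomorphism and up to twisting the right $B$-action by an automorphism of $B$, the unique indecomposable bimodule inducing the given stable equivalence with no relatively projective summand; this rigidity is the structural input on which the whole argument rests.

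Next, since $\alpha$ induces the identity on $A/\pi A$, it fixes every isomorphism class of simple $A/\pi A$-module and hence of projective indecomposable $A$-module, so that $_{\alpha^{-1}}M_0\cong M_0$ as left $A$-modules. Using the identity $A_\alpha\tenA M_0\cong {}_{\alpha^{-1}}M_0$ from \ref{aut-notation}, the bimodule $_{\alpha^{-1}}M_0$ is again indecomposable, projective on both sides, without relatively projective summand, and induces the same stable equivalence as $M_0$. By the rigidity of the previous paragraph there is therefore an automorphism $\beta$ of $B$, necessarily fixing the isomorphism classes of simple $B/\pi B$-modules, together with an isomorphism of $A$-$B$-bimodules $_{\alpha^{-1}}M_0\cong (M_0)_\beta$. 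Equivalently, transporting the self-equivalence $A_\alpha\tenA-$ through the stable equivalence, the $B$-$B$-bimodule $N_0\tenA {}_{\alpha^{-1}}M_0\cong (N_0)_\alpha\tenA M_0$ is isomorphic to $B_\beta$ up to a relatively projective summand. Producing $\beta$ as an honest automorphism of $B$, rather than merely an invertible object of a stable Picard group, is the main obstacle, and is exactly the place where selfinjectivity together with indecomposability is indispensable.

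It remains to fix the level of $\beta$ and to prove the group-theoretic statements. For the level I would reduce modulo $\pi^r$: as $\alpha^{-1}\equiv\Id\pmod{\pi^r}$, the canonical identification gives $_{\alpha^{-1}}M_0/\pi^rM_0 = M_0/\pi^rM_0$, so the isomorphism $_{\alpha^{-1}}M_0\cong(M_0)_\beta$ can be normalised to reduce to the identity of $M_0/\pi^rM_0$, which forces $\beta$ to induce the identity on $B/\pi^rB$, that is $\beta\in\Aut_r(B)$. Reassembling $M\cong M_0\oplus P$ then yields the asserted isomorphism $_{\alpha^{-1}}M\cong M_\beta$ for the full bimodule.

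Finally, the injective homomorphism $\alpha\mapsto A_\alpha$ from $\Out(A)$ to $\Pic(A)$ recalled above, combined with the isomorphisms $(U_\alpha)\tenA V\cong U\tenA({}_{\alpha^{-1}}V)$ and $(M_{\beta_1})_{\beta_2}\cong M_{\beta_1\beta_2}$, shows that the product $\alpha_1\alpha_2$ is carried to $\beta_1\beta_2$, so that $\alpha\mapsto\beta$ is a group homomorphism; and if $\alpha$ is inner then $A_\alpha\cong A$, whence $M_\beta\cong M$ and $\beta$ is inner by the uniqueness of $M_0$. Thus the assignment descends to a homomorphism $\Out_r(A)\to\Out_r(B)$. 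Running the identical construction with the roles of $A$ and $B$ and of $M_0$ and $N_0$ interchanged gives a homomorphism $\Out_r(B)\to\Out_r(A)$, and the uniqueness of the inducing bimodule shows that the two composites are the identity, establishing the isomorphism $\Out_r(A)\cong\Out_r(B)$.
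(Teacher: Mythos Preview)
Your overall strategy matches the paper's: invoke \cite[Theorem~4.2]{Listable} to produce $\beta$, verify the level, and check the group-theoretic properties by symmetry. The preliminary decomposition $M\cong M_0\oplus P$ is a harmless addition; the paper works with $M$ directly, leaving that reduction to the cited reference.

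The gap is in your argument that $\beta\in\Aut_r(B)$. You assert that the isomorphism ${_{\alpha^{-1}}{M_0}}\cong(M_0)_\beta$ ``can be normalised to reduce to the identity of $M_0/\pi^rM_0$, which forces $\beta$ to induce the identity on $B/\pi^rB$''. But reducing modulo $\pi^r$ yields a bimodule isomorphism $\bar M_0\to(\bar M_0)_{\bar\beta}$, and the identity on the underlying module $\bar M_0$ is a bimodule homomorphism to $(\bar M_0)_{\bar\beta}$ only if $\bar\beta$ already acts trivially on the right $\bar B$-action --- so the implication as stated is circular. (The normalisation you invoke is in fact the content of the \emph{next} lemma, Lemma~\ref{aMbLemma2}, whose hypothesis is precisely that $\beta\in\Aut_r(B)$.)

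The paper closes this gap differently and directly: from $\bar M_0\cong(\bar M_0)_{\bar\beta}$ one tensors on the left by $\bar N$ to obtain an isomorphism $\bar B\cong\bar B_{\bar\beta}$ of $\bar B$-$\bar B$-bimodules after discarding projective summands, which forces $\bar\beta$ to be \emph{inner} --- not the identity. One then uses Lemma~\ref{OutrLemma} to replace $\beta$, within its class in $\Out(B)$, by a representative lying in $\Aut_r(B)$. Your argument needs this step (or an equivalent) inserted before any talk of normalising the isomorphism.
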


\begin{proof}
Let $\alpha\in$ $\Aut_r(A)$. Then $\alpha$ induces the identity
on $A/\pi A\cong$ $k\tenO A$, hence stabilises the isomorphism
classes of all $A/\pi A$-modules, hence in particular of all
simple $A/\pi A$-modules and of all finitely generated projective 
$A$-modules. By \cite[Theorem 4.2]{Listable} there exists an 
automorphism $\beta$ of $B$, unique up to inner automorphisms, such 
that we have an isomorphism of $A$-$B$-bimodules
${_{\alpha^{-1}}{M}}\cong$ $M_\beta$. We sketch the argument to prove 
this. The self-stable equivalence induced by 
$N\tenA {_{\alpha^{-1}}{M}}$ reduces to the identity functor on the 
stable module category of $k\tenO B$-modules because $\alpha$ induces 
the identity on $k\tenO A$. By \cite[Theorem 2.1]{Listable}, a stable 
equivalence of Morita type preserving simple modules is induced by a 
Morita equivalence. Since this Morita equivalence fixes the isomorphism 
class of any simple $B$-module, it is given by a bimodule of the form
$B_\beta$, for some $\beta\in$ $\Aut(B)$. Thus the $B$-$B$-bimodule
$B_\beta$ is, up to isomorphism, the unique nonprojective direct 
summand of $N\tenA {_{\alpha^{-1}}{M}}$. Tensoring by $M\tenB-$
shows that $M_\beta\cong$ ${_{\alpha^{-1}}{M}}$. 
We observe next that $\beta$ belongs to $\Out_r(B)$. We have an 
isomorphism of $A/\pi^rA$-$B/\pi^rB$-bimodules 
$$M_\beta/\pi^rM_\beta\cong 
{_{\alpha^{-1}}{M}}/\pi^r({_{\alpha^{-1}}{M}})\ .$$ 
Since $\alpha$ induces the identity on $A/\pi^r A$, it follows that
this bimodule is isomorphic to $M/\pi^r M$. Tensoring with
$N$ on the left implies that the nonprojective summands of
$N\tenA M/\pi^r(N\tenB M)$ and $N\tenA M_\beta/\pi^r(N\tenB M_\beta)$ 
are isomorphic, hence that $B/\pi^rB$ and $B_\beta/\pi^r B_\beta$ are
isomorphic as $B$-$B$-bimodules. This implies that the
automorphism on $B/\pi^rB$ induced by $\beta$ is inner.
A straightforward verification shows that the correspondence 
$\alpha\mapsto$ $\beta$ induces a group homomorphism $\Out_r(A)\to$ 
$\Out_r(B)$. Exchanging the roles of $A$ and $B$ yields the 
inverse of this group homomorphism. 
\end{proof}

It follows immediately from Lemma \ref{aMbLemma1}, applied
with $r$ and $2r$, together with Proposition \ref{automHH1-1}
and \ref{automHH1-2} that in the situation of Theorem \ref{outmstable} 
there is a group isomorphism $HH^1_A(A/\pi^r A)\cong$ $HH^1_B(B/\pi^r B)$
which makes the diagram in \ref{outmstable} commutative. In order 
to show that this isomorphism is induced by the functor 
$N\tenA - \tenA M$, we will need the following technical 
observation.

\begin{Lemma} \label{aMbLemma2}
With the notation and hypotheses of \ref{outmstable}, 
let $\alpha\in$ $\Aut_r(A)$ and $\beta\in$ 
$\Aut_r(B)$ such that ${_{\alpha^{-1}}{M}}\cong$ $M_\beta$
as $A$-$B$-bimodules. Then there is an $A$-$B$-bimodule isomorphism 
$\varphi : {_{\alpha^{-1}}{M}}\cong$ $M_\beta$
which induces the identity map on $M/\pi^rM$.
\end{Lemma}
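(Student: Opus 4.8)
The plan is to reduce the statement to the surjectivity of a single reduction map between endomorphism rings, and then to settle that surjectivity by splitting off a projective bimodule summand and invoking the centre hypothesis together with Lemma \ref{selfinjLemma}. First I would record the elementary reduction. Since $\alpha\in\Aut_r(A)$ and $\beta\in\Aut_r(B)$ induce the identity modulo $\pi^r$, the underlying identity map of $M$ identifies the three bimodules ${_{\alpha^{-1}}M}/\pi^r$, $M/\pi^rM$ and $M_\beta/\pi^r$ over $A/\pi^rA\tenO(B/\pi^rB)^\op$; hence any isomorphism $\varphi:{_{\alpha^{-1}}M}\to M_\beta$ reduces to an automorphism $\bar\varphi$ of $M/\pi^rM$ over $\bar C:=A/\pi^rA\tenO(B/\pi^rB)^\op$. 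As subsets of $\End_\CO(M)$ we have $\End_{A\tenO B^\op}(M_\beta)=\End_{A\tenO B^\op}(M)=:E$, because $\beta$ is bijective. If some $h\in E$ reduces to $\bar\varphi^{-1}$, then $h$ is invertible (it is invertible modulo $\pi$, so its determinant is a unit of $\CO$, $M$ being $\CO$-free of finite rank), and $\varphi':=h\circ\varphi:{_{\alpha^{-1}}M}\to M_\beta$ is then a bimodule isomorphism with $\overline{\varphi'}=\bar h\,\bar\varphi=\Id$. Thus it suffices to show that the reduction map $E\to\End_{\bar C}(M/\pi^rM)$ is surjective.

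Next I would translate this into a statement about $B$-$B$-bimodules. Writing $\CE=\End_A(M)=\Hom_A(M,M)$, the right $B$-actions on $M$ make $\CE$ a $B$-$B$-bimodule, and passing to $B$-central elements gives a natural isomorphism $E\cong\Hom_{B\tenO B^\op}(B,\CE)$. Since $M$ is projective as a left $A$-module, $\Hom_A(M,-)$ is exact and $\Hom_A(M,M/\pi^rM)\cong\CE/\pi^r\CE$ as $B$-$B$-bimodules, so likewise $\End_{\bar C}(M/\pi^rM)\cong\Hom_{B\tenO B^\op}(B,\CE/\pi^r\CE)$; under these identifications the reduction map becomes $\Hom_{B\tenO B^\op}(B,\CE)\to\Hom_{B\tenO B^\op}(B,\CE/\pi^r\CE)$. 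The key structural input is that $\CE\cong B\oplus Y$ as $B$-$B$-bimodules for some projective $B\tenO B^\op$-module $Y$. Since $M$ is finitely generated projective as a left $A$-module there is a natural isomorphism $\CE\cong\Hom_A(M,A)\tenA M$; identifying the left dual $\Hom_A(M,A)$ with $N$ up to a projective bimodule summand (for symmetric $A$ it equals $M^\vee$, which together with $M$ induces the stable equivalence) and using the defining isomorphism $N\tenA M\cong B\oplus Y$ of the stable equivalence of Morita type yields $\CE\cong B\oplus Y$ with $Y$ projective.

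With this decomposition the reduction map splits as the direct sum of two maps. The first, $\Hom_{B\tenO B^\op}(B,B)\to\Hom_{B\tenO B^\op}(B,B/\pi^rB)$, is the canonical map $Z(B)\to Z(B/\pi^rB)$, hence surjective by the hypothesis of \ref{outmstable}. The second, $\Hom_{B\tenO B^\op}(B,Y)\to\Hom_{B\tenO B^\op}(B,Y/\pi^rY)$, is surjective by Lemma \ref{selfinjLemma} applied to the algebra $B\tenO B^\op$ (self-injective modulo $\pi$), the $\CO$-free module $B$, and the projective module $Y$. Therefore the reduction map is surjective, which by the first paragraph produces the desired isomorphism $\varphi'$ inducing the identity on $M/\pi^rM$.

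I expect the main obstacle to be the identification $\CE\cong B\oplus Y$ in the stated generality. For symmetric algebras the left dual $\Hom_A(M,A)$ is canonically $M^\vee$ and the isomorphism is transparent, so Theorem \ref{integrableinvariant} is immediate; but for merely relatively $\CO$-self-injective $A$ one must control a Nakayama twist relating $\Hom_A(M,A)$ to $N$ and verify that it does not disturb the splitting off of the free summand $B$, on which the centre hypothesis is used. Everything else is bookkeeping with the adjunction $\Hom_{A\tenO B^\op}(M,-)\cong\Hom_{B\tenO B^\op}(B,\Hom_A(M,-))$ and the two surjectivity inputs already available.
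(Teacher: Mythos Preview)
Your reduction in the first paragraph is clean and correct, and the translation via the adjunction $\Hom_{A\tenO B^\op}(M,-)\cong\Hom_{B\tenO B^\op}(B,\Hom_A(M,-))$ is fine. The argument is complete and correct in the symmetric case, which is all that is needed for Theorem~\ref{integrableinvariant}: then $\Hom_A(M,A)\cong M^\vee$ canonically, one may take $N=M^\vee$, and $\CE\cong N\tenA M\cong B\oplus Y$ is immediate.

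Your route is genuinely different from the paper's. The paper does \emph{not} prove that the reduction map $E\to\End_{\bar C}(\bar M)$ is surjective. Instead it works stably: it invokes Brou\'e's isomorphism $Z_\st(\bar A)\cong\Endbar_{\bar A\ten_{\bar\CO}\bar B^\op}(\bar M)$ to find $z\in Z(A)$ so that, after replacing $\psi$ by $z^{-1}\psi$, the defect $\bar\psi-\Id_{\bar M}$ factors through a projective bimodule $\bar Q$; it then lifts the two factors separately (one by projectivity of $Q$, the other by Lemma~\ref{selfinjLemma}) and subtracts the lifted composite from $\psi$, using that $M$ is indecomposable nonprojective to see the result is still an isomorphism. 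So the paper handles the ``identity part'' via the stable centre and the ``projective part'' by lifting a factorisation, whereas you split $\CE$ as a bimodule and handle the two summands of the target directly. Your version buys a slightly more conceptual statement (outright surjectivity of the reduction map) and avoids the appeal to indecomposability of $M$; the paper's version buys generality, since Brou\'e's stable-centre isomorphism is available for any stable equivalence of Morita type between selfinjective algebras without needing the bimodule splitting $\CE\cong B\oplus Y$.

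That splitting is precisely the gap you flag in the last paragraph, and it is real in the generality of Theorem~\ref{outmstable}. The hypotheses there only require $k\tenO A$ selfinjective, so $A$ need not be Frobenius over $\CO$ and there is no Nakayama automorphism of $A$ to twist by; one must instead argue over $k$ that $\End_{\bar A}(\bar M)\cong\bar B\oplus\bar Y$ with $\bar Y$ projective (using that $\Hom_{\bar A}(\bar M,-)$ is a stable inverse to $\bar M\ten_{\bar B}-$ and then passing from a stable isomorphism of functors to a bimodule decomposition), and then lift idempotents to $\CO$. This can be done, but it is not shorter than the paper's detour through $Z_\st$. If your aim is only the symmetric case, your argument is a perfectly good alternative and arguably tidier.
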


\begin{proof}
Let $\psi : {_{\alpha^{-1}}{M}}\cong$ $M_\beta$ be an isomorphism
of $A$-$B$-bimodules. That is, $\psi$ is an $\CO$-linear automorphism
of $M$ satisfying
$$\psi(\alpha^{-1}(a)mb) = a\psi(m)\beta(b)$$
for all $a\in$ $A$, $b\in$ $B$, and $m\in$ $M$. 
Set $\bar\CO=$ $\CO/\pi^r\CO$. Then $\bar A=$ $A/\pi^r A$ and
$\bar B=$ $B/\pi^r B$ are $\bar\CO$-algebras, and the
$\bar A$-$\bar B$-bimodule $\bar M=$ $M/\pi^r M$ and the
$\bar B$-$\bar A$-bimodule $\bar N=$ $N/\pi^r N$ induce a stable
equivalence of Morita type between $\bar A$ and $\bar B$.
Multiplication by elements in $Z(\bar A)$ induces an
$\bar\CO$-algebra isomorphism $Z(\bar A)\cong$ 
$\End_{\bar A\ten_{\bar\CO} \bar A^\op}(\bar A)$. We denote by 
$Z^\pr(\bar A)$ the ideal in $Z(\bar A)$ corresponding to those
endomorphisms of $\bar A$ which factor through a projective 
$\bar A\ten_{\bar\CO} \bar A^\op$-module. Following \cite[\S 5]{BroueEq}
we set $Z_\st(\bar A)=$ $Z(\bar A)/Z^\pr(\bar A)$. We use the analogous
notation for $\bar B$. We denote by 
$\Endbar_{\bar A\ten_{\bar\CO}\bar B^\op}(\bar M)$ the quotient of
$\End_{\bar A\ten_{\bar\CO}\bar B^\op}(\bar M)$ by the ideal of endomorphisms
which factor through a projective $\bar A\ten_{\bar\CO}\bar B^\op$-module.
By the proof of \cite[5.4]{BroueEq} we have $\bar\CO$-algebra isomorphisms
$$Z_\st(\bar A)\cong\Endbar_{\bar A\ten_{\bar\CO}\bar B^\op}(\bar M)
\cong Z_\st(\bar B)$$
induced by left and right mulitplication with elements of $Z(\bar A)$
and $Z(\bar B)$ on $\bar M$, respectively. The map $\bar\psi$ on
$\bar M$ induced by $\psi$ is in fact an endormorphism of $\bar M$
as an $\bar A$-$\bar B$-bimodule, because $\alpha$ and $\beta$
induce the identity on $\bar A$ and $\bar B$, respectively. Since
the map $Z(A)\to$ $Z(\bar A)$ is assumed to be surjective, there
is $z\in$ $Z(A)$ such that the $A$-$B$-endomorphism $\lambda$ of $M_\beta$
given by left multiplication with $z$ induces an
$\bar A$-$\bar B$-endomorphism $\bar\lambda$ on $\bar M$ belonging to
the same class as that of $\bar\psi$ in 
$\Endbar_{\bar A\ten_{\bar\CO}\bar B^\op}(\bar M)$.
Thus, after replacing $\psi$ by $z^{-1}\cdot \psi$, we may assume that
the $\bar A$-$\bar B$-endomorphism $\bar\psi-\Id_{\bar M}$ of $\bar M$
factors through a projective cover $\bar Q$ of $\bar M$; say
$\bar\psi-\Id_{\bar M} =$ $\bar\epsilon\circ\bar\delta$ for some
bimodule homomorphisms $\bar\delta : \bar M\to$ $\bar Q$ and
$\bar\epsilon : \bar Q\to$ $\bar M$. Let $Q$ be a projective cover
of $M_\beta$. Since $\beta$ induces the identity on $\bar B$, it follows 
that $Q/\pi^r Q$ is a projective cover of $\bar M$, hence isomorphic
to $\bar Q$. Choose notation such that $Q/\pi^r Q=$ $\bar Q$. 
The bimodule $\bar\epsilon$ lifts to an $A$-$B$-bimodule homomorphism  
$\epsilon : Q\to$ $M_\beta$ because $Q$ is projective.
By the assumptions, the algebra $k\tenO (A\tenO B\op)$ is selfinjective.
Since the involved modules $M$ and $Q$ are $\CO$-free, it follows
from Lemma \ref{selfinjLemma} that the bimodule homomorphisms 
$\bar\delta$ lifts to an $A$-$B$-bimodule homomorphism 
$\delta : {_{\alpha^{-1}}{M}}\to$ $Q$.
Since $M$ is indecomposable nonprojective,
it follows that $\psi-\epsilon\circ\delta : {_{\alpha^{-1}}{M}}\cong$ 
$M_\beta$ is still an isomorphism. By construction, 
$\psi-\epsilon\circ\delta$ induces the identity on $\bar M$,
whence the result.
\end{proof}

\begin{proof}[Proof of Theorem \ref{outmstable}]
The first statement holds by Lemma \ref{aMbLemma1}. 
It remains to verify the commutativity of the diagram in the statement.
Let $\alpha\in$ $\Aut_r(A)$ and $\beta\in$ $\Aut_r(B)$ such
that ${_{\alpha^{-1}}{M}}\cong$ $M_\beta$. By Lemma \ref{aMbLemma2}, there
is an isomorphism $\varphi : {_{\alpha^{-1}}{M}}\cong$ $M_\beta$ which 
induces the identity on $M/\pi^rM$. 
Let $\mu$ and $\nu$ be the $\CO$-linear endomorphisms of
$A$ and $B$, respectively, satisfying $\alpha(a)=$ $a+\pi^r\mu(a)$
for all $a\in$ $A$ and $\beta(b)=$ $b+\pi^r\nu(b)$ for all
$b\in$ $B$. Set $\bar A=$ $A/\pi^rA$ and $\bar B=$ $B/\pi^rB$.
Denote by $\bar\mu$ and $\bar\nu$ the classes 
in $HH^1_A(\bar A)$ and $HH^1_B(\bar B)$ determined by $\mu$ and $\nu$;
that is, $\bar\mu$ and $\bar\nu$ are the images of the classes of 
$\alpha$ and $\beta$ under the canonical group homomorphisms 
$\Out_r(A)\to$ $HH^1(\bar A)$ and $\Out_r(B)\to$ $HH^1(\bar B)$,
respectively. Set $\bar M=$ $M/\pi^rM$ and $\bar\CO=$ $\CO/\pi^r\CO$.
The assumptions imply that tensoring by $\bar M$ yields a
stable equivalence of Morita type between $\bar A$ and $\bar B$.
In particular, we have isomorphisms
$$HH^1(\bar A) \cong 
\Ext^1_{\bar A\ten_{\bar\CO}\bar B^\op}(\bar M,\bar M)
\cong HH^1(\bar B)$$
induced by the functors $-\tenA M$ and 
$M\tenB-$. Since $N\tenA M$ is isomorphic to $B$
in the relatively $\CO$-stable category of $B\tenO B^\op$-modules,
it follows that the composition 
$$HH^1(\bar A) \cong HH^1(\bar B)$$
of the two previous isomorphisms is induced by the functor
$N\tenA - \tenA M$. Thus we need to show that $M\tenB-$ and $-\tenA M$
send $\bar\nu$ and $\bar\mu$, respectively, to the the same class in
$\Ext^1_{\bar A\ten_{\bar\CO}\bar B^\op}(\bar M,\bar M)$.
Note that the functors $M\tenB-$ and $-\tenA M$ induce obvious algebra 
homomorphisms
$$\xymatrix{\End_{\bar A^e}(\bar A)\ar[r] &
\End_{\bar A\ten_{\bar\CO}\bar B^\op}(\bar M) & 
\End_{\bar B^e}(\bar B) \ar[l] }\ ,$$
where $\bar A^e=$ $\bar A\ten_{\bar\CO}\bar A^\op$ and
$\bar B^e=$ $\bar B\ten_{\bar\CO}\bar B^\op$.
Tensoring the short exact sequences
$$\xymatrix{0\ar[r]&\bar A\ar[r]&(A/\pi^{2r}A)_\alpha\ar[r]&\bar A
\ar[r] & 0}$$
$$\xymatrix{0\ar[r]&\bar B\ar[r]&(B/\pi^{2r}B)_\beta\ar[r]&\bar B
\ar[r] & 0}$$
by $-\tenA M$ and $M\ten_{B}-$, respectively, yields short 
exact sequences of the form
$$\xymatrix{0\ar[r]&\bar M\ar[r]&{_{\alpha^{-1}}{(M/\pi^{2r}M)}}
\ar[r]&\bar M \ar[r] & 0}$$
$$\xymatrix{0\ar[r]&\bar M\ar[r]&(M/\pi^{2r}M)_\beta
\ar[r]&\bar M \ar[r] & 0}$$
These two exact sequences are equivalent, because the isomorphism
$\varphi$ above induces an isomorphism
${_{\alpha^{-1}}{(M/\pi^{2r}M)}}\cong$ $(M/\pi^{2r}M)_\beta$
which induces in turn the identity map on $\bar M$.
Applying  the functor $\Hom_{A\tenO B^\op}(M,-)$ to these
two short exact sequences, with identifications analogous to
those in Proposition \ref{automHH1-3}, yields therefore the same 
connecting homomorphism
$$\End_{\bar A\ten_{\bar\CO}\bar B^\op}(M)\to
\Ext^1_{\bar A\ten_{\bar\CO}\bar B^\op}(\bar M, \bar M)$$
Denote by $\bar\mu\ten\Id_{\bar M}$ and $\Id_{\bar M}\ten\bar\nu$
the images in $\Ext^1_{\bar A\ten_{\bar\CO}\bar B^\op}(\bar M, \bar M)$
of $\bar\mu$ and $\bar\nu$ under the functors $-\tenA M$ and $M\tenB-$,
respectively. By the naturality properties of connecting homomorphisms, 
it follows from the description of $\bar\mu$ and $\bar\nu$ 
in Proposition \ref{automHH1-3}, that $\bar\mu\ten\Id_{\bar M}$ and 
$\Id_{\bar M}\ten\bar\nu$ are both equal to the image of $\Id_{\bar M}$
under the above connecting homomorphism. This shows that the group 
isomorphism $HH^1_A(\bar A)\cong$ $HH^1_B(\bar B)$ induced by the group 
isomorphism $\Out_r(A)\cong$ $\Out_r(B)$ is equal to the group 
isomorphism determined by the functor $N\tenA - \tenA M$, completing the 
proof of Theorem \ref{outmstable}.
\end{proof}

\begin{Remark} \label{MoritaRemark}
With the notation of Theorem \ref{outmstable}, if $M$ and $N$ induce a 
Morita equivalence, then the conclusions of the theorem and the lemmas 
in this section hold without the assumption that $A$ and $B$ are 
relatively $\CO$-injective with separable semisimple quotients. This 
hypothesis is needed in Lemma \ref{aMbLemma1} via there reference 
\cite[Theorem 4.2]{Listable}, but it is not needed if $M$ and $N$ 
induce a Morita equivalence, by the remarks at the beginning of this 
section. This hypothesis is also needed in the proof of Lemma 
\ref{aMbLemma2}, but again, if $M$ and $N$ induce a Morita equivalence, 
then left and right multiplication by elements in $Z(\bar A)$ and 
$Z(\bar B)$, respectively, induce algebra isomorphisms
$$Z(\bar A)\cong  \End_{\bar A\ten_{\bar\CO} \bar B^\op}(\bar M)
\cong Z(\bar B)\ ,$$
where the notation is as in the proof of \ref{aMbLemma2}. Thus 
$\bar\psi$ is induced by multiplication with an element $\bar z\in$ 
$Z(\bar A)$, hence after replacing $\psi$ by $z^{-1}\cdot\psi$ we get 
that $\bar\psi=$ $\Id_{\bar M}$. As mentioned above, this shows the 
invariance of the groups of integrable derivations under Morita 
equivalences, a result which is due to  Farkas, Geiss, and Marcos 
\cite{FGM} in the case $\CO=$ $k[[t]]$, here slightly generalised to 
arbitrary complete discrete valuation rings.
\end{Remark}

%%%%%%%%%%%%%%%%%%%%%%%%%%%%%%%%%%%%%%%%%%%%%%%%%%%%%%%%%%%%%%%%%%%%%
\section{Examples} \label{Examplessection}

The following examples illustrate some of the basic connections between 
integrable elements in $HH^1(A/\pi^mA)$, the algebra $A$, and the 
ramification of the ring $\CO$.

\begin{Example} \label{abelianpgroupExample}
Suppose that $k$ has prime characteristic $p$ and that $\CO$
has characteristic zero.  Let $P$ be a nontrivial finite abelian 
$p$-group. Suppose that $K$ contains a primitive $|P|$-th root of 
unity $\tau$. Let $r$ be the positive integer such that $\pi^r\CO=$ 
$(\tau-1)\CO$.

\smallskip\noindent {\bf (a)}
Any $\CO$-algebra automorphism of $\OP$ which induces the identity
on $kP$ is of the form $\alpha(u)=$ $\zeta(u)u$ for all $u\in$ $P$, 
where $\zeta\in$ $\Hom(P,\CO^\times)$. Thus $\Aut_1(\OP)\cong$ 
$\Hom(P,\CO^\times)$, which is (noncanonically) isomorphic to $P$, and 
the elements in $HH^1_{\OP}(kP)$ are induced by linear endomorphisms 
of $\OP$ sending $u\in$ $P$ to $\frac{\zeta(u)-1}{\pi}u$ for all $u\in$ 
$P$. If $r\geq$ $2$, then $\pi$ divides the coefficients 
$\frac{\zeta(u)-1}{\pi}$, and hence the induced linear endomorphisms of 
$kP$ are zero. Thus if $r\geq$ $2$, then $HH^1_\OP(kP)=$ $\{0\}$.

\smallskip\noindent {\bf (b)}
Set $\bar\CO=$ $\CO/(\tau-1)\CO=$ $\CO/\pi^r\CO$. Any $\CO$-algebra 
automorphism of $\OP$ which induces the identity on $kP$ induces by the 
above description the identity on $\bar\CO P$, hence is as before of the 
form $\alpha(u)=$ $\zeta(u)u$ for all $u\in$ $P$, where $\zeta : P\to$ 
$\CO^\times$ is a group homomorphism. Thus, as before, we have 
$\Aut_r(\OP)\cong$ $\Hom(P,\CO^\times)$. However, 
$HH^1_{\OP}(\bar\CO P)$ is a nontrivial finite subgroup of 
$HH^1(\bar\CO P)$ consisting of all derivations $d_\zeta$ of $\bar\CO P$ 
induced by the linear maps on $\OP$ sending $u\in$ $P$ to 
$\frac{\zeta(u)-1}{\tau-1}u$, where $\zeta\in$ $\Hom(P,\CO^\times)$.

\smallskip\noindent {\bf (c)}
It is shown in \cite{FGM} that if $\CO=$ $k[[t]]$, then any integrable
derivation on $A/tA$ preserves the Jacobson radical of $A/tA$. The 
result does not carry over to general $\CO$ and the Artinian quotients 
$A/\pi^r A$ of $A$. Choose $P$ cyclic of order $p$ with a generator 
denoted $y$. The automorphism $\alpha$ of $\OP$ sending $y$ to $\tau y$ 
determines a derivation of $\bar\CO P$ induced by the linear 
endomorphism $\mu$ of $\OP$ sending $y^i$ to 
$\frac{\tau^i-1}{\tau-1} y^i$. This endomorphism sends $y-1$ to $y$, 
and hence the derivation on $\bar\CO P$ induced by $\mu$ does not 
preserve the Jacobson radical.
\end{Example}

\begin{Example}
Different lifts of a symmetric $k$-algebra yield in general different 
groups of integrable derivations. Suppose that $k$ has prime 
characteristic $p$. Let $P$ be a finite cyclic $p$-group of order $p^s$ 
for some positive integer $s$; let $y$ be a generator of $P$. 
There is a $k$-algebra isomorphism $kP\cong$ $k[x]/(x^{p^s})$ sending 
$y-1$ to $x+ (x^{p^s})$. We have $\dim_k(HH^1(kP))=$ $p^s$; more precisely,
for any $i$ such that $0\leq i\leq p^s-1$ there is a unique derivation
$d_i$ on $kP$ sending $x$ to $x^i$, and the set 
$\{d_i\}_{0\leq i\leq p^s-1}$ is a $k$-basis of $HH^1(kP)$. 
If $A=$ $\OP$, then by the previous example, $HH^1_{\OP}(kP)$ is a 
finite (possibly trivial) subgroup of $HH^1(kP)$.
By contrast, if $A=$ $\CO[x]/(x^{p^s})$, then $HH^1_A(kP)$ is the 
$(p^s-1)$-dimensional $k$-subspace spanned by the set 
$\{d_i\}_{1\leq i\leq p^s-1}$. Indeed, in that case, for
$1\leq i\leq p^s-1$ and $\lambda\in$ $\CO$ there is an
$\CO$-algebra automorphism $\alpha_{i,\lambda}$ in $\Aut_1(A)$ sending 
$x$ to $x+\pi \lambda x^i$, and this automorphism gives rise to the 
derivation $\bar\lambda d_i$ on $kP$, where $\bar\lambda$ is the image of 
$\lambda$ in $k$.
\end{Example}

%%%%%%%%%%%%%%%%%%%%%%%%%%%%%%%%%%%%%%%%%%%%%%%%%%%%%%%%%%%%%%%%%%%%%%

\end{document}